\newcommand{\xra}{\xrightarrow}
\newcommand{\Z}{\mathbb{Z}}
\newcommand{\ZF}{\mathbb{Z}_\mathbb{F}}
\newcommand{\zp}[1]{\Z/p^{#1}\Z}
\newcommand{\NN}{\mathbb{N}\cup\{\infty\}}
\newcommand{\R}{\mathbb{R}}
\newcommand{\FP}[1]{\mathbb{F}P^{#1}}
\newcommand{\id}{\mathbbm{1}}
\newcommand{\A}[1]{\ensuremath{\mathcal{A}_\sharp^{#1}}}
\newcommand{\Ah}[1]{\ensuremath{\mathcal{A}_\ast^{#1}}}
\newcommand{\E}{\ensuremath{\mathcal{E}}}
\newcommand{\NE}{\ensuremath{N\E}}
\newcommand{\NEh}{\ensuremath{N_\ast\E}}
\newcommand{\Pave}{Pave\v{s}i\'{c}\,}
\DeclareMathOperator{\Hom}{Hom}
\DeclareMathOperator{\End}{End}
\DeclareMathOperator{\Aut}{Aut}
\theoremstyle{plain}
\newtheorem{theorem}{Theorem}[section]
\newtheorem{proposition}[theorem]{Proposition}
\newtheorem{corollary}[theorem]{Corollary}
\newtheorem{lemma}[theorem]{Lemma}
\theoremstyle{definition}
\newtheorem{definition}[theorem]{Definition}
\newtheorem{example}[theorem]{Example}
\theoremstyle{remark}
\title{Self-closeness numbers of product spaces}
\author{Pengcheng Li}
\email{lipc@sustech.edu.cn}
\address{Department of Mathematics, Southern University of Science and Technology, Shenzhen, Guangdong, 518055, China}
\keywords{self-homotopy equivalence, self-closeness number, product spaces, reducibility}
\begin{document}

\begin{abstract}
	The self-closeness number of a CW-complex is a homotopy invariant defined by the minimal number $n$ such that every self-maps of $X$ which induces automorphisms on the first $n$ homotopy groups of $X$ is a homotopy equivalence. In this article we study the self-closeness numbers of finite Cartesian products, and prove that under certain conditions (called reducibility), the self-closeness number of product spaces equals to the maximum of self-closeness numbers of the factors. A series of criteria for the reducibility are investigated, and the results are used to determine self-closeness numbers of product spaces of some special spaces, such as Moore spaces, Eilenberg-MacLane spaces or atomic spaces.
\end{abstract}

\maketitle

\section{Introduction}\label{sec:intro}

Given based spaces $X,Y$, denote by $[X,Y]$ the set of homotopy classes of based maps from $X$ to $Y$, and let $\E(X)$ denote the group of homotopy classes of self-homotopy equivalences of $X$.  In 2015, Choi and Lee introduced a numerical homotopy invariant,
called the \emph{self-closeness number} $\NE(X)$ of $X$, by the minimal non-negative integer $n$ such that $\A{n}(X)=\E(X)$, where 
\[\A{n}(X)\coloneqq \{f\in [X,X]:f_\sharp\colon\pi_k(X)\xra{\cong}\pi_k(X),~~\forall ~k\leq n\}.\]
The self-closeness number provides a useful method to study self-homotopy equivalences by focusing on the homotopy groups of the space in certain range. Since Choi and Lee, this homotopy invariant has been studied by several authors, such as Oda and Yamaguchi  \cite{OY17,OY18,OY20}, Li \cite{L21}. 
This paper is devoted to the study of self-closeness numbers of finite product spaces. 

The group of self-homotopy equivalences of product spaces can be naturally studied from maps between its factors, related research papers which adopted this method include \cite{AY82,BH90,Heath96,Pavesic99}. In particular, \Pave \cite{Pavesic99} showed that under certain ``diagonalizability'' or ``reducibility'' conditions, the group $\E(X\times Y)$ can be decomposed as a product of its subgroups $\E(X\times Y;I)$, which consisting of elements of $\E(X\times Y)$ leaving $I$ fixed, $I=X,Y$. Many computable criteria for the reducibility appeared in the later paper \cite{Pavesic07}. 
By the universal property of Cartesian products, a map $f\colon Z\to X\times Y$ of spaces is usually denoted component-wise as $f=(f_X,f_Y)$,
For a fixed $n\in\NN$, we say the monoid $\A{n}(X\times Y)$ is \emph{reducible} if $f\in\A{n}(X\times Y)$ implies so are $f_{XX}$ and $f_{YY}$. In this paper we utilize the \Pave's analysis in \cite{Pavesic99,Pavesic07} to study the factorizations and the reducibility of the monoids $\A{n}(X\times Y)$. Although the obtained factorizations of the monoids $\A{n}(X\times Y)$ can be regarded as generalizations of that of $\E(X\times Y)$, there seems more restrictions on the criteria for the reducibility of monoids $\A{n}(X\times Y)$ than that for the group $\E(X\times Y)$. This is reasonable because elements of $\A{n}(X\times Y)$ usually have no homotopy inverses, and some homomorphisms between homotopy groups are not induced by maps between spaces.

The paper is organized as follows. In Section \ref{sec:def-redu} we extend the concept of the reducibility of $\A{n}(X\times Y)$ to that of $\A{n}(X_1\times\cdots\times X_m)$ for any $m$, and discuss some simple situations where the reducibility can be easily verified. In Section \ref{sec:decomp} we utilize the techniques in \cite{Pavesic07} to prove that if $\A{N}(X_1\times\cdots\times X_m)$ is reducible with $N=\max\{\NE(X_1),\cdots,\NE(X_m)\}$, then $N=\NE(X_1\times\cdots\times X_m)$ (Theorem \ref{thm:scn-prod}). Section \ref{sec:redu} further investigate sufficient conditions for the reducibility of the monoids $\A{n}(X\times Y)$. Firstly we prove that if every self-map of $Y$ that factors through $X$ induces nilpotent and central endomorphism of the first $n$ homotopy groups of $Y$, $n\geq \NE(X)$, then $\A{n}(X\times Y)$ is reducible if and only if $f\in\A{n}(X\times Y)$ implies that $f_{XX}\in\E(X)$ (Theorem \ref{thm:redu-prod}). Here an induced endomophism $\pi_k(f)$ is \emph{central} means that it commutes with endomorphisms induced by any other self-maps. A series of useful variations of Theorem \ref{thm:redu-prod} are derived.
Section \ref{sec:Appl} serves as applications of the criteria developed in Section \ref{sec:redu}. We firstly obtain criteria for the reducibility of $\A{n}(X\times Y)$ with $X$ a Moore space or an Eilenberg-MacLane space. Some examples are computed. Using the atomicity of spaces, we then determine the self-closeness number of finite products of atomic spaces under certain assumptions, see Theorem \ref{thm:atomic}. 

Throughout the paper all spaces have the homotopy types of connected CW-complexes, and all maps are base-point-preserving and are identified with their homotopy classes.
Given a group $G$, denote by $\End(G)$ the monoid of endomorphisms of $G$, and let $\Aut(G)$ be the units of $\End(G)$.

\section*{Acknowledgement}
The author was partially supported by National Natural Science Foundation of
China (Grant No. 12101290) and Project funded by China Postdoctoral
Science Foundation (Grant No. 2021M691441).

\section{Reducibility of the monoids $\A{n}(X_1\times\cdots\times X_m)$}\label{sec:def-redu}
Let $X$ be a connected CW-complex.  For each $n\geq 0$, the submonoid $\A{n}(X)$ of $[X,X]$ is given by 
\[\A{n}(X)\coloneqq \{f\in [X,X]:\pi_k(f)\in\Aut(\pi_k(X)),~forall~k\leq n\}.\]

Let $m\geq 2$ be a fixed integer. For each $k=1,\cdots,m$, denote by $p_k=p_{X_k}\colon X_1\times\cdots\times X_m\to X_k$ and $i_k=i_{X_k}\colon X_k\to X_1\times\cdots\times X_m$ the canonical projections and inclusions (defined by base-points). Then $p_k\circ i_l=\delta_{k,l}\cdot \id_{X_k}$, where $\delta_{k,l}$ is the Kronecker delta. Write a self-map $f$ of $X_1\times\cdots\times X_m$ component-wise as $f=(f_1,\cdots,f_m)$, where $f_k=p_{X_k}\circ f$. We have maps $f_{kl}=f_k\circ i_l\colon X_l\to X_k$.
The concept of \emph{reducibility} of self-homotopy equivalences of finite products $X_1\times\cdots\times X_m$ (cf.\cite{Pavesic07}) can be extended to that of elements of the monoids $\A{n}(X_1\times\cdots\times X_m)$ as follows.

\begin{definition}\label{def:prod-redu}
For any fixed $1\leq n\leq \infty$, $\A{n}(X_1\times\cdots\times X_m)$ is said to be \emph{reducible} if $f=(f_1,\cdots,f_m)\in \A{n}(X_1\times\cdots\times X_m)$ implies that so are the self-maps $(f_1,\cdots,f_m)$ with one component (and hence any number of components) $f_k$ replaced by $p_k$, $k=1,\cdots,m$.
\end{definition}

Note that the reducibility of $\A{n}(X\times Y)$ corresponds to \emph{$n$-reducibility} of self-maps of $X\times Y$ in \cite{JL17}.
 The following lemma is clear by definition. 

\begin{lemma}\label{lem:redu-equiv}
	For any fixed $1\leq n\leq \infty$, $\A{n}(X_1\times\cdots\times X_m)$ is reducible if and only if $f\in\A{n}(X_1\times\cdots\times X_m)$ implies that $f_{ii}\in\A{n}(X_i)$, $i=1,\cdots,m$.
\end{lemma}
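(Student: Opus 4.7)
The plan is to unwind both directions via the matrix description of the induced map on homotopy groups. Under the canonical splitting $\pi_s(X_1\times\cdots\times X_m)\cong\bigoplus_{j=1}^m\pi_s(X_j)$, a self-map $g=(g_1,\ldots,g_m)$ of the product induces on $\pi_s$ the matrix $\bigl((g_{jl})_*\bigr)_{j,l}$, where $g_{jl}=g_j\circ i_l$; hence $g\in\A{n}$ iff this matrix is an automorphism for every $s\le n$.

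For the forward direction, assume $\A{n}(X_1\times\cdots\times X_m)$ is reducible and fix $f\in\A{n}$ together with an index $i$. Invoking Definition~\ref{def:prod-redu} in its ``any number of components'' form, the map
\[
\tilde g_i \;=\; (p_1,\ldots,p_{i-1},f_i,p_{i+1},\ldots,p_m),
\]
obtained by replacing every component of $f$ except the $i$-th by the corresponding projection, lies in $\A{n}$. Its $\pi_s$-matrix has identity rows at every $j\ne i$ and $i$-th row $\bigl((f_{i1})_*,\ldots,(f_{im})_*\bigr)$; column operations (sweeping the identity rows across the $i$-th to clear its off-diagonal entries) reduce this matrix to a direct sum of identities on the summands $\pi_s(X_j)$, $j\ne i$, together with $(f_{ii})_*$ on $\pi_s(X_i)$. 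Hence $\tilde g_i\in\A{n}$ precisely when $(f_{ii})_*$ is an automorphism for every $s\le n$, which is to say $f_{ii}\in\A{n}(X_i)$.

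For the backward direction, suppose conversely that $f\in\A{n}$ forces $f_{ii}\in\A{n}(X_i)$ for all $i$. The same computation, read in reverse, shows that each $\tilde g_i$ lies in $\A{n}$, and iterating the process of replacing one further component at a time then produces every partial projection of $f$, each step preserving $\A{n}$ by the analogous block reduction at the unreplaced positions. This yields the defining condition of reducibility.

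The delicate point is the case $s=1$, where $\pi_1(X_1\times\cdots\times X_m)$ is a direct product of (possibly non-abelian) groups rather than a direct sum; the ``column operations'' must be rephrased as compositions and group multiplications in $\pi_1$, but the invertibility criterion still reduces to isomorphism of the block $(f_{ii})_*$ on $\pi_1(X_i)$, so the argument goes through unchanged.
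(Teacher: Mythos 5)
Your forward direction is correct and is essentially the argument the definition is meant to encode: replacing every component of $f$ except the $i$-th by the corresponding projection gives a map whose $\pi_s$-matrix is standard in all rows $j\neq i$, and such a matrix is an automorphism precisely when $\pi_s(f_{ii})$ is; your remark about nonabelian $\pi_1$ is also handled correctly. For $m=2$ your whole proof is complete, since there ``one component replaced'' and ``all but one replaced'' coincide. (The paper itself gives no argument, declaring the lemma clear from the definition, so the only question is whether your proof stands on its own.)

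The backward direction has a genuine gap when $m\ge 3$. Definition \ref{def:prod-redu} requires that the maps with a \emph{single} component $f_k$ replaced by $p_k$ lie in $\A{n}$, and your justification for this step --- ``each step preserving $\A{n}$ by the analogous block reduction at the unreplaced positions'' --- is not valid there: the block reduction you used in the forward direction works only because all rows but one are standard. After permuting the index $k$ to the last position, the $\pi_s$-matrix of $(f_1,\dots,p_k,\dots,f_m)$ has the form $\begin{pmatrix} A & b\\ 0 & 1\end{pmatrix}$, which is invertible if and only if the complementary principal block $A$ (delete the $k$-th row and column of $\pi_s(f)$) is invertible; and invertibility of $\pi_s(f)$ together with invertibility of all diagonal entries $\pi_s(f_{ii})$ does \emph{not} imply this. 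Concretely, the integer matrix $\begin{pmatrix}1&1&0\\1&1&1\\0&1&1\end{pmatrix}$ is invertible and has invertible diagonal entries, yet its upper-left $2\times 2$ block is singular, so replacing its third row by $(0,0,1)$ destroys invertibility. Thus the pointwise matrix implication you rely on is false, and your iteration never gets off the ground for $m\ge 3$. Any correct proof of this direction must exploit that the hypothesis is a statement about the whole monoid $\A{n}(X_1\times\cdots\times X_m)$ rather than about the single map $f$ --- for instance by applying it to composites of $f$ with the maps $\tilde g_i$ you already produced, whose diagonal $\pi_s$-entries are elements such as $\pi_s(f_{jj})+\pi_s(f_{ji})\pi_s(f_{ij})$, thereby gaining control of the off-diagonal products that enter the principal blocks --- and your proposal does not use this at any point.
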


\begin{lemma}\label{lem:redu-trivial}
	If for all $1\leq i<j\leq m$ and all $k\leq n$, every map $X_i\to X_j$ induces a trivial homomorphism $\pi_k(X_i)\to \pi_k(X_j)$ or every $X_j\to X_i$ induces a trivial homomorphism $\pi_k(X_j)\to \pi_k(X_i)$, then $\A{n}(X_1\times\cdots\times X_m)$ is reducible.
	\begin{proof}
		Given $f\in\A{n}(X_1\times\cdots\times X_m)$, for any $k\leq n$, $\pi_k(f)$ can be presented by the following matrix:
		\[[\pi_k(f_{ij})]_{m\times m}=\begin{bmatrix}
			\pi_k(f_{11}) & \cdots &\pi_k(f_{1m})\\
			\vdots & \ddots & \vdots \\
			\pi_k(f_{m1}) & \cdots & \pi_k(f_{mm})
		\end{bmatrix}\]
		By assumption, the matrix $\pi_k(f)$ is upper-triangular or lower-triangular, hence $\pi_k(f)$ is invertible if and only if $\pi_k(f_{ii})\in\Aut(\pi_k(X_i))$, $i=1,\cdots,m$. Thus $f_{ii}\in\A{n}(X_i)$, $i=1,\cdots,m$. It then follows by Lemma \ref{lem:redu-equiv} that $\A{n}(X_1\times\cdots\times X_m)$ is reducible.
	\end{proof}
\end{lemma}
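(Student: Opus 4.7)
My plan is to reduce via Lemma \ref{lem:redu-equiv} and then represent $\pi_k(f)$ as a matrix with respect to the canonical product decomposition of homotopy groups.

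First, by Lemma \ref{lem:redu-equiv}, it suffices to verify that every $f \in \A{n}(X_1 \times \cdots \times X_m)$ satisfies $f_{ii} \in \A{n}(X_i)$ for $i = 1, \ldots, m$, i.e., that $\pi_k(f_{ii}) \in \Aut(\pi_k(X_i))$ for every $k \leq n$. Next I would fix $k \leq n$ and exploit the natural isomorphism $\pi_k(X_1 \times \cdots \times X_m) \cong \bigoplus_{i=1}^m \pi_k(X_i)$ afforded by the projections $p_i$. With respect to this direct-sum decomposition, the endomorphism $\pi_k(f)$ is represented by the $m \times m$ matrix of abelian-group homomorphisms $[\pi_k(f_{ij})]$, where $f_{ij} = p_i \circ f \circ i_j \colon X_j \to X_i$, and the diagonal entries are precisely the $\pi_k(f_{ii})$ we want to show invertible.

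The core step is to read off triangularity of this matrix from the hypothesis: for each pair $i < j$, the given alternative forces either $\pi_k(f_{ji}) = 0$ (the $(j,i)$-entry) or $\pi_k(f_{ij}) = 0$ (the $(i,j)$-entry). Under the natural interpretation of the hypothesis, this yields a uniformly upper- or lower-triangular matrix representation of $\pi_k(f)$. Since $f \in \A{n}$ guarantees that $\pi_k(f)$ is an automorphism, and since invertibility of a triangular matrix of group homomorphisms on a finite direct sum of abelian groups is equivalent to invertibility of each diagonal entry, we conclude $\pi_k(f_{ii}) \in \Aut(\pi_k(X_i))$ for each $i$.

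The main obstacle I anticipate is purely bookkeeping: one must be careful that the hypothesis really does produce a triangular (rather than merely sparse) pattern of zeros for each fixed $k$, and then invoke the standard fact about triangular matrices over abelian groups. Once this is in place, applying the argument to every $k \leq n$ and every $i$ and appealing to Lemma \ref{lem:redu-equiv} yields reducibility of $\A{n}(X_1 \times \cdots \times X_m)$.
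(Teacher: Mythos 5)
Your argument is essentially identical to the paper's proof: invoke Lemma \ref{lem:redu-equiv}, represent $\pi_k(f)$ by the matrix $[\pi_k(f_{ij})]$, use the hypothesis to kill one entry in each off-diagonal pair so the matrix is triangular, and conclude from invertibility of $\pi_k(f)$ that each diagonal entry $\pi_k(f_{ii})$ is an automorphism. The bookkeeping point you flag (a genuinely triangular versus merely sparse zero pattern) is handled with the same brevity in the paper, which simply asserts triangularity from the assumption.
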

Some situations where Lemma \ref{lem:redu-trivial} applies were summarized in \citep[Corollary 2.2]{Pavesic99}, we don't repeat them here.
For a group $G$, denote by $Z(G)$ the center of $G$. Bidwell, Curran and McCaughan \citep[Theorem 3.2]{BCM06} proved that if $G$ and $H$ are finite groups with no common direct factors,  the group $\Aut(G\times H)$ consists of elements of the form 
\[\begin{bmatrix}
	\alpha&\beta\\
	\gamma&\delta
\end{bmatrix},\]
where $\alpha\in\Aut(G),\delta\in\Aut(H),\beta\in \Hom(H,Z(G)),\gamma\in \Hom(G,Z(H))$. Inductively applying their theorem we get

\begin{proposition}\label{prop:BCM}
	If for each $k\leq n$ and each $i\leq m$, $\pi_k(X_i)$ is finite, and for each pair $1\leq i\neq j\leq m$, $\pi_k(X_i)$ and $\pi_k(X_j)$ has no common direct factors, then $\A{n}(X_1\times\cdots\times X_m)$ is reducible.
\end{proposition}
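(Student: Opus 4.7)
The plan is a straightforward induction on $m$ using the Bidwell--Curran--McCaughan (BCM) theorem at each step, reformulated slightly to match the matrix shape of $\pi_k(f)$.

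Fix $f=(f_1,\ldots,f_m)\in\A{n}(X_1\times\cdots\times X_m)$ and $k\leq n$. By Lemma \ref{lem:redu-equiv} it suffices to show that $\pi_k(f_{ii})\in\Aut(\pi_k(X_i))$ for every $i$. The key preliminary observation is that the no-common-direct-factor hypothesis propagates through partitions: for any non-trivial decomposition $\{1,\ldots,m\}=S\sqcup T$, the finite groups $\prod_{s\in S}\pi_k(X_s)$ and $\prod_{t\in T}\pi_k(X_t)$ have no common direct factor, because by the Krull--Schmidt--Remak theorem a common indecomposable summand would appear in some $\pi_k(X_s)$ and some $\pi_k(X_t)$, contradicting the hypothesis on that pair.

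I would then induct on $m$. The base case $m=2$ is the direct application of BCM to the matrix
\[\pi_k(f)=\begin{bmatrix}\pi_k(f_{11})&\pi_k(f_{12})\\ \pi_k(f_{21})&\pi_k(f_{22})\end{bmatrix},\]
yielding $\pi_k(f_{ii})\in\Aut(\pi_k(X_i))$ for $i=1,2$. For the inductive step, regroup the product as $X_1\times(X_2\times\cdots\times X_m)$; the propagation observation supplies the BCM hypothesis, and the theorem produces $\pi_k(f_{11})\in\Aut(\pi_k(X_1))$ together with the conclusion that the lower-right $(m-1)\times(m-1)$ block $[\pi_k(f_{ij})]_{2\leq i,j\leq m}$ is an automorphism of $\pi_k(X_2)\times\cdots\times\pi_k(X_m)$. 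Applying the inductive hypothesis to this block finishes the argument, and Lemma \ref{lem:redu-equiv} delivers the reducibility.

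The main delicacy is that the induction must be phrased as a purely group-theoretic statement about matrices of homomorphisms between the finite groups $\pi_k(X_i)$, rather than being about self-maps of a product space, since the $(m-1)\times(m-1)$ block above need not be realised by any self-map of $X_2\times\cdots\times X_m$. Once this mild reformulation is made and the Krull--Schmidt propagation is in hand, the induction runs without further complication.
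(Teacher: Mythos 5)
Your proposal is correct and follows exactly the route the paper intends: the paper offers no proof beyond the phrase ``inductively applying their theorem,'' and your induction on $m$ via the Bidwell--Curran--McCaughan theorem, with the Krull--Schmidt argument showing the no-common-direct-factor hypothesis passes to the grouped product $\pi_k(X_2)\times\cdots\times\pi_k(X_m)$, together with Lemma \ref{lem:redu-equiv}, is precisely the missing detail. Your remark that the induction must be run as a purely group-theoretic statement about automorphisms of products of finite groups (since the lower-right block need not come from a self-map of $X_2\times\cdots\times X_m$) is a correct and worthwhile precaution.
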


\section{Factorization of the monoids $\A{n}(X_1\times\cdots\times X_m)$}\label{sec:decomp}
Given a monoid $(M,\cdot, 1)$ and its two submonoids $M_1,M_2$, the product $M_1\cdot M_2\coloneqq\{m_1\cdot m_2:m_i\in M_i,~i=1,2\}$ is a submonoid of $M$ if and only if $M_2\cdot M_1\subseteq M_1\cdot M_2$. Hence, if $M\subseteq M_1\cdot M_2$ holds as sets, then $M=M_1\cdot M_2$ holds as monoids. Unless otherwise stated, all factorizations of monoids are of submonoids. 

For each $n$, denote by $\A{n}(X\times Y;X)$ the submonoid of $\A{n}(X\times Y)$ consisting of elements $f$ satisfying $p_X\circ f\simeq p_X$, where $p_X\colon X\times Y\to X$ is the canonical projection. $\A{n}(X\times Y;Y)$ is similarly defined. If $n=\infty$, substitute $\A{\infty}$ by $\E$. Note that  $\E(X\times Y;I)$ corresponds to the notations $\E_I(X\times Y)$ in \cite{Pavesic99}, $I=X,Y$. 

\begin{proposition}\label{prop:prod-multi}
	Let $X,Y,Z$ be CW-complexes.
	\begin{enumerate}[(1) ]
		\item\label{F1} If $\A{n}(X\times Y)$ is reducible for some $n\geq \NE(X)$, then there is a factorization 
		 \[\A{n}(X\times Y)=\A{n}(X\times Y;X)\cdot \E(X\times Y;Y).\]
		\item\label{F2} If $\A{n}(X\times Y\times Z)$ is reducible for some $n\geq \NE(Y)$, then there is a factorization 
		\[\A{n}(X\times Y\times Z;X)=\A{n}(X\times Y\times Z;X\times Y)\cdot \E(X\times Y \times Z;X\times Z).\]
	\end{enumerate}
	\begin{proof}
		It suffices to show that both the left inclusions $``\subseteq''$ of the equalities of monoids in (1) and (2) hold as sets.

(1) Suppose $f=(f_X,f_Y)\in \A{n}(X\times Y)$. Then by assumption we have 
\[(p_X,f_Y)\in\A{n}(X\times Y;X),\quad (f_X,p_Y)\in\A{n}(X\times Y;Y)=\E(X\times Y;Y).\]
Let $(g,p_Y)$ be the inverse of $(f_X,p_Y)$. Then \[(p_X,f_Y)\circ (g,p_Y)=(g,f_Y(g,p_Y))\in\A{n}(X\times Y)\] and the reducibility of $\A{n}(X\times Y)$ implies $(p_X,f_Y(g,p_Y))\in\A{n}(X\times Y;X)$.
Thus
\[(p_X,f_Y(g,p_Y))\circ (f_X,p_Y)=\big(f_X,f_Y(g,p_Y)(f_X,p_Y)\big)=(f_X,f_Y)\]
and therefore the factorization in (1) is proved.

(2) Suppose $(p_X,f_Y,f_Z)\in\A{n}(X\times Y\times Z;X)$. Since $n\geq \NE(Y)$,
the inverse $(p_X,f_Y,p_Z)^{-1}$ exists. Apply the reducibility of $\A{n}(X\times Y\times Z)$ to the composition  
	\[(p_X,p_Y,f_Z)\circ (p_X,f_Y,p_Z)^{-1}=(p_X,p_Y(p_X,f_Y,p_Z)^{-1},f_Z(p_X,f_Y,p_Z)^{-1}),\]
    we have $(p_X,p_Y,f_Z(p_X,f_Y,p_Z)^{-1})\in\A{n}(X\times Y\times Z;X\times Y).$
	Thus \[(p_X,f_Y,f_Z)=(p_X,p_Y,f_Z(p_X,f_Y,p_Z)^{-1})\circ (p_X,f_Y,p_Z),\] 
    which completes the proof.
	\end{proof}
\end{proposition}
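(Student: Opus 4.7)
The plan is to establish both factorizations by verifying only the set-level left inclusions, as permitted by the opening remark that $M\subseteq M_1\cdot M_2$ entails equality of monoids whenever $M_1,M_2$ are submonoids of $M$.

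For part (1), let $f=(f_X,f_Y)\in\A{n}(X\times Y)$. Lemma \ref{lem:redu-equiv} and the reducibility hypothesis yield $f_{XX}\in\A{n}(X)$ and $f_{YY}\in\A{n}(Y)$. The induced homomorphism of $(p_X,f_Y)$ on each $\pi_k$ is block lower-triangular with diagonal blocks $\id$ and $\pi_k(f_{YY})$, so $(p_X,f_Y)\in\A{n}(X\times Y;X)$; similarly $(f_X,p_Y)\in\A{n}(X\times Y;Y)$. The hypothesis $n\geq\NE(X)$ upgrades the second of these: since $f_{XX}\in\A{n}(X)=\E(X)$, the analogous block upper-triangular matrix is invertible on every $\pi_k$, and Whitehead's theorem delivers $(f_X,p_Y)\in\E(X\times Y;Y)$ with some inverse of the form $(g,p_Y)$. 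Now $(p_X,f_Y)\circ(g,p_Y)=(g,f_Y(g,p_Y))$ lies in $\A{n}(X\times Y)$, so a second application of reducibility permits replacing its first coordinate $g$ by $p_X$, giving $(p_X,f_Y(g,p_Y))\in\A{n}(X\times Y;X)$. Right-multiplying by $(f_X,p_Y)$ then recovers $f$.

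Part (2) runs parallel inside the submonoid $\A{n}(X\times Y\times Z;X)$. Given $(p_X,f_Y,f_Z)$ there, reducibility together with $n\geq\NE(Y)$ yields $f_{YY}\in\E(Y)$, so $(p_X,f_Y,p_Z)$ induces an isomorphism on every $\pi_k$ (its matrix has identity blocks on the $X$- and $Z$-diagonal entries and $\pi_k(f_{YY})$ in the middle), and hence lies in $\E(X\times Y\times Z;X\times Z)$. Composing the identity $p_X\circ(p_X,f_Y,p_Z)=p_X$ on the right with $(p_X,f_Y,p_Z)^{-1}$ shows $p_X\circ(p_X,f_Y,p_Z)^{-1}=p_X$, and likewise for $p_Z$, so the inverse has the form $(p_X,g_Y,p_Z)$ for some $g_Y$. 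The composition $(p_X,p_Y,f_Z)\circ(p_X,f_Y,p_Z)^{-1}$ equals $(p_X,g_Y,f_Z(p_X,f_Y,p_Z)^{-1})$ and lies in $\A{n}(X\times Y\times Z)$; reducibility permits replacing $g_Y$ by $p_Y$ to place it in $\A{n}(X\times Y\times Z;X\times Y)$, and right-multiplication by $(p_X,f_Y,p_Z)$ recovers $(p_X,f_Y,f_Z)$.

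The only substantive content beyond bookkeeping is the upgrade from $\A{n}$ to $\E$ for a self-map having one projection coordinate, and this is immediate from the block matrix form of the induced map on each $\pi_k$ together with Whitehead's theorem. The main obstacle is purely notational: one has to track which coordinates remain projections throughout each composition so that the reducibility hypothesis can be invoked to peel off the correct coordinate at each step, and in part (2) one needs the small observation that an inverse of a self-map fixing two projection coordinates fixes those same projections.
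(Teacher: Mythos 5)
Your proof is correct and follows essentially the same route as the paper's: reduce to the set-level inclusions, use reducibility to peel a coordinate down to a projection, use $n\geq \NE(X)$ (resp.\ $n\geq\NE(Y)$) to turn the complementary factor into a homotopy equivalence whose inverse again fixes the projection coordinates, and recompose. Your added justifications (the block-triangular matrix argument with Whitehead's theorem, and the observation that the inverse preserves projection coordinates) merely make explicit steps the paper leaves implicit.
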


Recall that there is a chain of submonoids by inclusion: 
\[\E(X)=\A{\infty}(X)\subseteq \A{n}(X)\subseteq \A{1}(X)\subseteq \A{0}(X)=[X,X].\]
The \emph{self-closeness number} $\NE(X)$ is defined by 
\[\NE(X)\coloneqq\min\{\A{n}(X)=\E(X)\}.\]
Choi and Lee \cite{CL15} proved that for the product space $X\times Y$, there holds an inequality (\citep[Theorem 3]{CL15}):
\[\NE(X\times Y)\geq \max\{\NE(X),\NE(Y)\}.\]
Inductively applying this inequality we get
\begin{lemma}\label{lem:ineq-HK}
	Let $X_1,\cdots,X_m$ be CW-complexes. There holds an inequality 
	\[\NE(X_1\times\cdots\times X_m)\geq \max\{\NE(X_1),\cdots,\NE(X_m)\}.\]
\end{lemma}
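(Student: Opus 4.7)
The plan is a straightforward induction on the number of factors $m$, using the two-factor inequality of Choi and Lee quoted just above the statement as the base case. The argument is essentially routine, and no obstacles are expected.

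For the base case $m=2$, the inequality $\NE(X_1\times X_2)\geq \max\{\NE(X_1),\NE(X_2)\}$ is exactly \cite[Theorem 3]{CL15}, as recalled in the paragraph preceding the lemma.

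For the inductive step, assume the inequality holds for $m-1$ factors with $m\geq 3$. I would regroup the product as
\[
X_1\times\cdots\times X_m \simeq (X_1\times\cdots\times X_{m-1})\times X_m,
\]
and apply the two-factor Choi--Lee inequality to the pair $(X_1\times\cdots\times X_{m-1},\,X_m)$, obtaining
\[
\NE(X_1\times\cdots\times X_m)\geq \max\bigl\{\NE(X_1\times\cdots\times X_{m-1}),\,\NE(X_m)\bigr\}.
\]
The inductive hypothesis then gives $\NE(X_1\times\cdots\times X_{m-1})\geq \max\{\NE(X_1),\dots,\NE(X_{m-1})\}$, and combining these two bounds yields the desired inequality
\[
\NE(X_1\times\cdots\times X_m)\geq \max\{\NE(X_1),\dots,\NE(X_m)\}.
\]

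The only thing to notice is that the grouping is harmless because $\NE$ is a homotopy invariant and Cartesian product is associative up to homeomorphism; no compatibility of the projections or inclusions needs to be verified beyond what is already built into the two-factor statement. In particular, there is no genuine obstacle here: the entire content of the lemma is packaged in the $m=2$ case proved by Choi and Lee.
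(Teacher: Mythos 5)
Your proof is correct and is exactly the argument the paper intends: the paper simply states that the lemma follows by inductively applying the two-factor inequality of Choi and Lee \cite[Theorem 3]{CL15}, and your induction spells out precisely that. Nothing further is needed.
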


\begin{theorem}\label{thm:scn-prod}
	Let $X_1,\cdots,X_m$ be based connected CW-complexes and let $N=\max\{\NE(X_1),\cdots,\NE(X_m)\}$.
	If $\A{N}(X_1\times\cdots\times X_m)$ is reducible, then $\A{N}(X_1\times\cdots\times X_m)=\E(X_1\times\cdots\times X_m)$, and hence $N=\NE(X_1\times\cdots\times X_m)$.
 \begin{proof}
	By Lemma \ref{lem:ineq-HK}, it suffices to show that $\NE(X_1\times\cdots\times X_m)\leq N$,  or equivalently $\A{N}(X_1\times\cdots\times X_m)\subseteq \E(X_1\times\cdots\times X_m)$.

	For each $2\leq k\leq m$, denote by 
	\begin{equation*}
		\varPi_k=X_1\times\cdots\widehat{X}_k\times \cdots\times X_m
	\end{equation*}
	 the subspace of $\prod_{k=1}^mX_k$ whose $k$-th coordinate is the base-point of $X_k$. By Proposition \ref{prop:prod-multi} (\ref{F2}), for each $2\leq k\leq m$ there exist a factorization 
	\begin{multline*}
		\A{N}(X_1\times\cdots\times X_m;X_1\times\cdots\times X_{k-1})\\
		=\A{N}(X_1\times\cdots\times X_m;X_1\times\cdots\times X_k)\cdot \E(X_1\times\cdots\times X_m;\varPi_k).
	\end{multline*} 
	There holds a sequence of equalities
	\begin{align*}
		\A{N}(X_1\times\cdots\times X_m)&=_1\A{N}(X_1\times\cdots\times X_m;X_1)\cdot \E(X_1\times\cdots\times X_m;\varPi_1)\\
		&=_2\A{N}(X_1\times\cdots\times X_m;X_1\times X_2)\cdot \E(X_1\times\cdots\times X_m;\varPi_2)\\
		&\hspace{5cm}
		\cdot \E(X_1\times\cdots\times X_m;\varPi_1)\\
		&\vdots\\
		&=_m\prod_{i=1}^m\E(X_1\times\cdots\times X_m;\varPi_{m+1-i})\\
		&=\E(X_1\times\cdots\times X_m),
	\end{align*}
	where the first equality $=_1$ holds by Proposition \ref{prop:prod-multi} (\ref{F1}), the $i$-th equalities $=_i (i=2,\cdots,m)$ hold by the Proposition \ref{prop:prod-multi} (\ref{F2}), and the last equality is then clear. Thus $\NE(X_1\times\cdots\times X_m)\leq N$ and therefore $\NE(X_1\times\cdots\times X_m)=N$.
 \end{proof}	
\end{theorem}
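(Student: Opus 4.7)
The plan is to combine Lemma \ref{lem:ineq-HK} with an iterated application of Proposition \ref{prop:prod-multi}. Lemma \ref{lem:ineq-HK} already gives $\NE(X_1\times\cdots\times X_m)\geq N$, so the task reduces to the reverse inequality, which at the monoid level amounts to showing
\[
\A{N}(X_1\times\cdots\times X_m)\subseteq \E(X_1\times\cdots\times X_m).
\]
The strategy is to write any element of $\A{N}(X_1\times\cdots\times X_m)$ as a finite product of homotopy self-equivalences, by peeling off one ``$\varPi_k$-preserving'' factor from $\E$ at each step.

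First I would apply Proposition \ref{prop:prod-multi}(\ref{F1}) to the grouping $X=X_1$, $Y=X_2\times\cdots\times X_m$; this is permissible since $N\geq \NE(X_1)$ and $\A{N}(X_1\times\cdots\times X_m)$ is reducible by hypothesis. The output is
\[
\A{N}(X_1\times\cdots\times X_m)=\A{N}(X_1\times\cdots\times X_m;X_1)\cdot\E(X_1\times\cdots\times X_m;\varPi_1).
\]
Next I would iterate Proposition \ref{prop:prod-multi}(\ref{F2}): for $k=2,\dots,m$, regrouping the factors as $X=X_1\times\cdots\times X_{k-1}$, $Y=X_k$, $Z=X_{k+1}\times\cdots\times X_m$ and invoking the hypothesis $N\geq \NE(X_k)$, one can peel off an $\E(\cdots;\varPi_k)$-factor from $\A{N}(\cdots;X_1\times\cdots\times X_{k-1})$ to obtain $\A{N}(\cdots;X_1\times\cdots\times X_k)$. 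After $m$ such steps, the remaining submonoid is $\A{N}(\cdots;X_1\times\cdots\times X_m)$, which contains only the homotopy class of the identity. Collecting the peeled factors shows that every element of $\A{N}(X_1\times\cdots\times X_m)$ lies in $\E(X_1\times\cdots\times X_m)$.

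The main obstacle is just bookkeeping: verifying that the iterated regroupings in the inductive step are legitimate, i.e., that reducibility of the original $\A{N}(X_1\times\cdots\times X_m)$ is enough to apply Proposition \ref{prop:prod-multi}(\ref{F2}) when two or more of the $X_i$ are amalgamated into a single factor. Since reducibility is a component-wise condition on a self-map's matrix and is invariant under lumping components together, this is routine; once confirmed, the telescoping identity yields $\A{N}=\E$ and therefore $\NE(X_1\times\cdots\times X_m)=N$.
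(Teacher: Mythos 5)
Your proposal is correct and follows essentially the same route as the paper's own proof: the lower bound from Lemma \ref{lem:ineq-HK}, then Proposition \ref{prop:prod-multi}(\ref{F1}) for the first peeling and iterated applications of Proposition \ref{prop:prod-multi}(\ref{F2}) with the factors regrouped as $X_1\times\cdots\times X_{k-1}$, $X_k$, $X_{k+1}\times\cdots\times X_m$, telescoping down to the trivial monoid. Your remark that reducibility of the fine decomposition passes to the lumped one is exactly the point covered by the phrase ``and hence any number of components'' in Definition \ref{def:prod-redu}, so the bookkeeping is indeed routine.
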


Given a group $G$ and an integer $n\geq 1$, denote by $K_n(G)$ the Eilenberg-MacLane space satisfying $\pi_n(K_n(G))\cong G$, $\pi_{i\neq n}(K_n(G))=0$.
It is clear that $\NE(K_n(G))=n$ for any $n,G$.
\begin{example}\label{ex:sphere-EM}
	Let $n_1,\cdots,n_m$ be positive integers and let $G_1,\cdots,G_m$ be groups. 
	\begin{enumerate}[(1) ]
		\item If $n_1<\cdots<n_m$, then $\NE(S^{n_1}\times\cdots\times S^{n_m})=n_m$.
		\item\label{ex:EM} $\NE\big(K_{n_1}(G_1)\times\cdots\times K_{n_m}(G_m)\big)=\max\{n_1,\cdots,n_m\}$.
	\end{enumerate} 
	
	\begin{proof}
		 If $n_1<\cdots<n_m$, by Lemma \ref{lem:redu-trivial} it is clear that $\A{r}(S^{n_1}\times\cdots \times S^{n_m})$ and $\A{r}\big(K_{n_1}(G_1)\times\cdots\times K_{n_m}(G_m)\big)$ is reducible for any $r\in\NN$. Thus by Theorem \ref{thm:scn-prod} we get the equalities.
		
	For Eilenberg-MacLane spaces, note that if $k=l$, then $K_{k}(G)\times K_{l}(H)\simeq K_{k}(G\times H)$ for any groups $G,H$. It follows that the second equality still holds even if there exist some $i\neq j$ such that $n_i=n_j$.
 	\end{proof}
\end{example}

There is an alternative proof of Theorem \ref{thm:scn-prod} motivated by the \emph{LU}-decomposition of $\E(X_1\times\cdots\times X_m)$ (\citep[Theorem 5.4]{Pavesic07}). For each $k=1,\cdots,m$, let $l_k,u_k$ be the self-maps of $X_1\times\cdots\times X_m$ defined by 
\[l_k=(p_1,\cdots,p_k,\ast,\cdots,\ast),u_k=(\ast,\cdots,\ast,p_k,\cdots,p_m),\]
where $p_i=p_{X_i}$ is the canonical projection onto the $i$-th factor, $i=1,\cdots,m$, and $\ast$ denote the constant maps. Denote a self-map $f$ of $X_1\times\cdots\times X_m$ by $f=(f_1,\cdots,f_m)$ with $f_k=p_k\circ f$, and denote $f_{kl}=p_{X_k}\circ f\circ i_{X_l}$, $k,l=1,\cdots,m$.
For each $n\in\NN$, set
\begin{align*}
	L_\sharp^n(X_1,\cdots,X_m)&\coloneqq\{f\in\A{n}(X_1\times\cdots\times X_m):f_k=f_k\circ l_k,k=1,\cdots,m\},\\
	U(X_1,\cdots,X_m)&\coloneqq\{f\in\E(X_1\times\cdots\times X_m):f_k=f_k\circ u_k,k=1,\cdots,m\}.
\end{align*}
Note that endomorphisms induced by elements of $L_\sharp^n(X_1,\cdots,X_m)$ are represented by lower-triangular matrices, while that of $U(X_1,\cdots,X_m)$ have the form of upper-triangular matrices with identities on the diagonal entries.
Note also that $L(X_1,\cdots,X_m)\coloneqq L_\sharp^{\infty}(X_1,\cdots,X_m)$ and $U(X_1,\cdots,X_m)$ are subgroups of $\E(X_1\times\cdots X_m)$ (\citep[Proposition 5.3]{Pavesic07}).

\begin{lemma}\label{lem:LU}
	Let $n\in\NN$. 
	\begin{enumerate}[(1)]
		\item\label{LU-1} $L_\sharp^n(X_1,\cdots,X_m)$ is a submonoid of $\A{n}(X_1\times\cdots\times X_m)$.
		\item\label{LU-2} There is a split extension of monoids: 
	     \[\begin{tikzcd}[column sep=small]
			1\ar[r]& \bar{L}_\sharp^n(X_1,\cdots,X_m)\ar[r]&L_\sharp^n(X_1,\cdots,X_m)\ar[r,"\Phi", shift left]&\prod_{k=1}^m \A{n}(X_k)\ar[r]\ar[l,"s",shift left]&1,
		 \end{tikzcd}\]
	     where $\Phi(f)=(f_{11},\cdots,f_{mm})$, $s(g_1,\cdots,g_m)$ is defined by $p_k(s(g_1,\cdots,g_m))=g_k\circ p_k$, $k=1,\cdots,m$; and 
		 \[\bar{L}_\sharp^n(X_1,\cdots,X_m)\coloneqq\ker(\Phi)=\{f\in L_\sharp^n(X_1,\cdots,X_m):f_{kk}=\id_{X_k},k=1,\cdots,m\}.\]
	\end{enumerate} 
	\begin{proof}
	The proof of (\ref{LU-1}) is similar to that of \citep[Proposition 3.1]{Pavesic05}, and the proof of (\ref{LU-2}) refers to that of \citep[Proposition 5.5]{Pavesic07}.
	\end{proof}
\end{lemma}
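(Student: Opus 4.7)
The plan is to unpack the structural condition $f_k = f_k \circ l_k$ into a simple combinatorial statement: since $l_k$ kills the last $m-k$ coordinates, $l_k \circ i_j = \ast$ for every $j>k$, and therefore $f\in L_\sharp^n(X_1,\ldots,X_m)$ is equivalent to $f_{kj}=f_k\circ i_j=\ast$ whenever $j>k$, plus the auto-on-$\pi_{\leq n}$ condition inherited from $\A{n}$. In other words, membership in $L_\sharp^n$ is exactly the condition that the ``component matrix'' $[f_{kj}]$ is lower triangular with constant maps above the diagonal. This reformulation drives the entire proof.

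For part (\ref{LU-1}), I would first note that the identity $(p_1,\ldots,p_m)$ trivially satisfies the defining condition. For closure under composition, given $f,g\in L_\sharp^n$, the map $g\circ i_j\colon X_j\to X_1\times\cdots\times X_m$ has its first $j-1$ components equal to $\ast$, while $f_k$ depends only on the first $k$ coordinates (precisely because $f_k = f_k\circ l_k$); hence for $j>k$ every slot that $f_k$ sees is trivial, giving $(fg)_{kj}=f_k\circ g\circ i_j = \ast$. Since $\A{n}(X_1\times\cdots\times X_m)$ is itself a submonoid of the endomorphism monoid of $X_1\times\cdots\times X_m$, the composite $fg$ lies in $\A{n}$, completing the submonoid claim.

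For part (\ref{LU-2}), the diagonal case of the same calculation gives $(fg)_{kk}=f_{kk}\circ g_{kk}$: writing out $l_k\circ g\circ i_k$ shows that only its $k$-th slot is non-trivial and equals $g_{kk}$, so $f_k\circ g\circ i_k=f_k\circ i_k\circ g_{kk}=f_{kk}\circ g_{kk}$. This makes $\Phi$ a monoid homomorphism, and the codomain is correct because for $q\leq n$ the matrix $[\pi_q(f_{kj})]$ is invertible and lower triangular, forcing each diagonal entry $\pi_q(f_{kk})$ to be an automorphism, hence $f_{kk}\in\A{n}(X_k)$. For the section, I would set $s(g_1,\ldots,g_m)\coloneqq(g_1p_1,\ldots,g_m p_m)$: the identity $p_k\circ l_k=p_k$ yields the structural condition defining $L_\sharp^n$, the induced map on $\pi_q$ is the diagonal endomorphism with entries $\pi_q(g_k)$ and hence an automorphism for $q\leq n$, a direct componentwise calculation shows $s$ respects composition, and $(s(g_1,\ldots,g_m))_{kk}=g_k\circ p_k\circ i_k=g_k$ gives $\Phi\circ s=\id$. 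Finally, by definition, $\ker\Phi = \bar{L}_\sharp^n$.

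The main obstacle is mild: it consists in keeping careful track of the componentwise compositions to derive $(fg)_{kj}=\ast$ for $j>k$ and $(fg)_{kk}=f_{kk}\circ g_{kk}$ from the universal properties of $p_k$, $i_k$, and $l_k$. Once the lower-triangular interpretation of $L_\sharp^n$ is adopted, each subsequent step reduces to a short diagram chase, and the argument parallels \citep[Proposition 3.1]{Pavesic05} and \citep[Proposition 5.5]{Pavesic07} with the $\A{n}$-hypothesis fitting seamlessly into the triangular structure on induced homotopy maps.
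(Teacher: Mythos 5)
The paper itself gives no argument here beyond citing Pave\v{s}i\'c, so the real question is whether your direct proof is sound; it has one genuine gap, in part (1). Your opening ``reformulation'' --- that $f\in L_\sharp^n$ is \emph{equivalent} to $f_{kj}=\ast$ for $j>k$ together with membership in $\A{n}$ --- is false in the direction you actually use. A map out of a Cartesian product is not determined, even up to homotopy, by its restrictions to the axes: the collapse map $q\colon S^2\times S^2\to S^2\wedge S^2=S^4$ is trivial on $S^2\vee S^2$ and satisfies $q\circ l_1\simeq\ast$, yet $q\not\simeq\ast$; so ``all above-diagonal components constant'' does not imply the defining condition $f_k=f_k\circ l_k$. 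Consequently your closure argument is incomplete: for $f,g\in L_\sharp^n$ you verify only that $(fg)_{kj}=\ast$ for $j>k$ and that $fg\in\A{n}$, and then conclude $fg\in L_\sharp^n$ by the invalid converse; the identity $(fg)_k=(fg)_k\circ l_k$ is never checked. The repair is short and should replace that step: since a map \emph{into} a product is determined by its components, and since $g_r=g_r\circ l_r$ and $l_r\circ l_k=l_r$ for $r\le k$, one gets $l_k\circ g\circ l_k=l_k\circ g$ (the $r$-th component of either side is $g_r$ for $r\le k$ and $\ast$ for $r>k$), whence $(fg)_k\circ l_k=f_k\circ l_k\circ g\circ l_k=f_k\circ l_k\circ g=f_k\circ g=(fg)_k$. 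With this in place, your computations in part (2) --- $(fg)_{kk}=f_{kk}\circ g_{kk}$ via $l_k\circ g\circ i_k=i_k\circ g_{kk}$, the section $s(g_1,\dots,g_m)=(g_1p_1,\dots,g_mp_m)$, $\Phi\circ s=\id$, and $\ker\Phi=\bar L_\sharp^n$ --- are correct as written.

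A smaller caveat: the claim that invertibility of the lower-triangular matrix $[\pi_q(f_{kj})]$ ``forces'' each $\pi_q(f_{kk})$ to be an automorphism is not a purely formal matrix fact, because the inverse of $\pi_q(f)$ is not known to be triangular; for non-Hopfian groups an invertible triangular matrix can have non-invertible diagonal entries (shift operators on $\bigoplus_{i\ge1}\Z$ already give a $2\times2$ example). The inference is fine when the relevant homotopy groups are, say, finitely generated (argue along the invariant filtration by the subgroups $\bigoplus_{k\ge r}\pi_q(X_k)$, using Hopficity of the subquotients), and it is the same step the paper takes in Lemma \ref{lem:redu-trivial}, but in your write-up it deserves either such a hypothesis or an explicit argument rather than the word ``forcing''.
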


There hold two chains by inclusions of monoids:
\begin{align*}
	L_\sharp^n(X_1,\cdots,X_m)&\subseteq L_\sharp^n(X_1\times X_2,\cdots,X_m)\subseteq \cdots\subseteq L_\sharp^n(X_1\times\cdots\times X_m);\\
	U(X_1,\cdots,X_m)&\supseteq U(X_1\times X_2,\cdots,X_m)\supseteq \cdots \supseteq U(X_1\times\cdots\times X_m)=\{1\}.
\end{align*}

We have the following extension of \citep[Theorem 5.4]{Pavesic07}.
\begin{proposition}\label{prop:LU}
If $n\geq \max\{\NE(X_1),\cdots,\NE(X_{m-1})\}$ and $\A{n}(X_1\times\cdots\times X_m)$ is reducible, then there is a factorization of monoids 
	\[\A{n}(X_1\times\cdots\times X_m)=L_\sharp^n(X_1,\cdots,X_m)\cdot U(X_1,\cdots,X_m).\]
	
	\begin{proof}
	We only sketch the proof here, the details are similar to that of \citep[Theorem 5.4]{Pavesic07}. 
	
	The induction process starts with $m=2$. Suppose $f=(f_1,f_2)\in\A{n}(X_1\times X_2)$. By reducibility we have $(f_1,p_2)\in \A{n}(X_1\times X_2)$, which is equivalent to $f_{11}\in \A{n}(X_1)=\E(X_1)$ for $n\geq \NE(X_1)$. Hence $(f_1,p_2),(f_{11}\circ p_1,p_2)\in L_\sharp^n(X_1,X_2)$. 
	By the arguments in the top $5$ lines of \citep[Page 410]{Pavesic07} we get that \begin{align*}
		 f'&=f\circ (f_1,p_2)^{-1}\circ (f_{11}\circ p_1,p_2)\in L_\sharp^n(X_1,X_2),\\
		 f''&=(f_{11}\circ p_1,p_2)^{-1}\circ (f_1,p_2)\in U(X_1,X_2)
	\end{align*}
	Thus $f=f'\circ f''$, the factorization in the proposition is proved for $m=2$.

	The arguments for the general case is totally parallel to the last paragraph of the proof of \citep[Theorem 5.4]{Pavesic07}, by substituting the notations ``$\Aut$'' by $\A{n}$, and ``$L$'' by ``$L_\sharp^n$''.  
	\end{proof}
\end{proposition}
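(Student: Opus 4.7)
The plan is to adapt the LU-decomposition argument of \citep[Theorem 5.4]{Pavesic07} from the group $\E(X_1\times\cdots\times X_m)$ to the monoid $\A{n}(X_1\times\cdots\times X_m)$, by induction on $m$. The hypothesis $n\geq \NE(X_i)$ for $i=1,\ldots,m-1$ is exactly what will allow the intermediate maps, obtained from reducibility only at the level of $\A{n}$, to be promoted to honest self-homotopy equivalences and thus inverted; this is the technical content that distinguishes the monoid version from the group version.

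For the base case $m=2$: given $f=(f_1,f_2)\in \A{n}(X_1\times X_2)$, reducibility yields $(f_1,p_2)\in \A{n}(X_1\times X_2)$, and inspecting the induced matrix on each $\pi_k$ forces $f_{11}\in \A{n}(X_1)=\E(X_1)$ since $n\geq \NE(X_1)$. I would first record the auxiliary fact that any map of the form $(g,p_2)\colon X_1\times X_2\to X_1\times X_2$ with $g\circ i_1\in\E(X_1)$ is itself a self-homotopy equivalence, since it is fiberwise over $X_2$ and restricts on the fiber to the equivalence $g\circ i_1$. This gives $(f_1,p_2)^{-1}\in \E(X_1\times X_2)$, and I set
\[f'=f\circ (f_1,p_2)^{-1}\circ (f_{11}\circ p_1,p_2),\qquad f''=(f_{11}\circ p_1,p_2)^{-1}\circ (f_1,p_2).\]
A direct computation gives $p_1\circ f'=f_{11}\circ p_1$, which is $l_1$-invariant, while the $l_2=\id$ condition on $f'$ is automatic, so $f'\in L_\sharp^n(X_1,X_2)$. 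Similarly $p_2\circ f''=p_2$ and $(f'')_{11}=\id_{X_1}$, so the same fiberwise criterion places $f''\in U(X_1,X_2)$. Cancellation then yields $f=f'\circ f''$.

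For the inductive step, assuming the factorization for products of $m-1$ factors, given $f=(f_1,\ldots,f_m)\in\A{n}(X_1\times\cdots\times X_m)$ I would use reducibility to form $\tilde f=(f_1,\ldots,f_{m-1},p_m)\in\A{n}$. The same fiberwise-equivalence argument, now applied over $X_m$ and combined with the inductive factorization on the first $m-1$ factors, promotes $\tilde f$ to a genuine self-homotopy equivalence; peeling off the last column then produces an upper-triangular factor lying in $U(X_1,\cdots,X_m)$. What remains is supported on the first $m-1$ factors (the $m$-th slot being trivialized), where the inductive hypothesis delivers an $L_\sharp^n\cdot U$ decomposition, and reassembling gives the required factorization of $f$.

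The main obstacle throughout is the passage from $\A{n}$ to $\E$ for the intermediate maps one needs to invert: unlike $\E$, the monoid $\A{n}$ carries no internal notion of inverse, so we must verify case-by-case that the specific elements constructed are actually self-homotopy equivalences. This is precisely where the hypothesis $n\geq \NE(X_i)$ for $i\leq m-1$ is consumed, via the fiberwise-equivalence criterion over the corresponding factor. Once this point is handled, the matrix-level manipulations are a direct translation of the classical LU decomposition, which is why the sketch needs only the substitutions $\Aut\rightsquigarrow \A{n}$ and $L\rightsquigarrow L_\sharp^n$ to carry over.
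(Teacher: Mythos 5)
Your proposal follows essentially the same route as the paper's own sketch: the base case $m=2$ uses exactly the same elements $f'=f\circ(f_1,p_2)^{-1}\circ(f_{11}\circ p_1,p_2)$ and $f''=(f_{11}\circ p_1,p_2)^{-1}\circ(f_1,p_2)$, and the inductive step is the same adaptation of Pave\v{s}i\'{c}'s $LU$-decomposition, with the hypothesis $n\geq \NE(X_i)$ for $i\leq m-1$ consumed at the same point, namely to promote the intermediate $\A{n}$-elements to genuine equivalences so they can be inverted. The one detail you should make explicit in the inductive step is that reducibility of $\A{n}(X_1\times\cdots\times X_m)$ is inherited by $\A{n}(X_1\times\cdots\times X_{m-1})$ (extend a self-map of the sub-product by $p_m$ and apply reducibility of the full product), since this is what licenses invoking the inductive factorization on the first $m-1$ factors.
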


\begin{proof}[Alternative proof of Theorem \ref{thm:scn-prod}]
	Let $N=\max\{\NE(X_1),\cdots,\NE(X_m)\}$.
	By Lemma \ref{lem:LU} (\ref{LU-2}), for any $n\geq N$ there hold 
	\[L_\sharp^n(X_1,\cdots,X_m)=L(X_1,\cdots,X_m)\subseteq \E(X_1\times\cdots\times X_m).\] 
	Thus by Proposition \ref{prop:LU} we get \[\A{N}(X_1\times\cdots\times X_m)=L(X_1,\cdots,X_m)\cdot U(X_1,\cdots,X_m)=\E(X_1\times\cdots\times X_m).\]
\end{proof}

For a simply-connected CW-complex $X$ and each $n\in\NN$, denote 
\[\Ah{n}(X)\coloneqq\{f\in [X,X]:H_i(f)\in \Aut(H_i(X)),~\forall ~i\leq n\}.\]
There is also the \emph{homology self-closeness number} $\NEh(X)$ defined by 
\[\NEh(X)\coloneqq\min\{n:\Ah{n}(X)=\E(X)\}.\]
For related papers, one may consult \cite{OY20,L21}.
Given spaces $X_1,\cdots,X_m$, denote by $i_k\colon X_k\to X_1\vee \cdots\vee X_m$ the canonical inclusion. Write a self-map $f\colon X_1\vee \cdots\vee X_m\to X_1\vee \cdots\vee X_m$ component-wise as $f=(f_1,\cdots,f_m)$, where $f_k=f\circ i_k$, $k=1,\cdots,m$. For a fixed $n\in\NN$, we say that $\Ah{n}(X_1\vee \cdots\vee X_m)$ is \emph{reducible} if $f=(f_1,\cdots,f_m)\in\Ah{n}(X_1\vee \cdots\vee X_m)$ implies that so are the self-maps $(f_1,\cdots,f_m)$ with one component (and hence any number of components) $f_k$ replaced by $i_k$, $k=1,\cdots,m$. 

We remark without proof that Theorem \ref{thm:scn-prod} has the following dualization.
\begin{theorem}\label{thm:scn-sum}
	Let $X_1,\cdots,X_m$ be simply-connected based CW-complexes. If $\Ah{N}(X_1\vee \cdots\vee X_m)$ is reducible with $N=\max\{\NEh(X_1),\cdots,\NEh(X_m)\}$, then 
	$\Ah{N}(X_1\vee \cdots\vee X_m)=\E(X_1\vee \cdots\vee X_m)$.	
\end{theorem}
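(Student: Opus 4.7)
The plan is to mirror the proof of Theorem \ref{thm:scn-prod} under the duality exchanging \{products, projections, homotopy\} with \{wedges, retractions, homology\}. Throughout, the simply-connected hypothesis enters via the splitting $H_k(X_1\vee\cdots\vee X_m)\cong\bigoplus_i H_k(X_i)$ for $k\geq 1$ and via Whitehead's theorem to promote homology isomorphisms in all degrees to homotopy equivalences. Let $r_k\colon X_1\vee\cdots\vee X_m\to X_k$ be the canonical retraction, playing the role dual to the projection $p_k$.

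First I would establish the homology analog of Lemma \ref{lem:ineq-HK}, namely $\NEh(X_1\vee\cdots\vee X_m)\geq\max\{\NEh(X_1),\cdots,\NEh(X_m)\}$. For each index $i$ and each $\phi\in [X_i,X_i]$, the self-map $\phi\vee\id$ of the wedge lies in $\Ah{n}$ if and only if $\phi\in\Ah{n}(X_i)$, and is a homotopy equivalence if and only if $\phi\in\E(X_i)$ (the latter using Whitehead on $\phi$). The inequality then follows by a short induction analogous to the product case.

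Second I would prove a wedge analog of Proposition \ref{prop:prod-multi}. For a sub-wedge $W\subseteq X_1\vee\cdots\vee X_m$, let $\Ah{n}(X_1\vee\cdots\vee X_m;W)$ denote the submonoid of $\Ah{n}$ consisting of self-maps $f$ with $f\circ i_W\simeq i_W$. The key claim is that if $\Ah{n}(X\vee Y)$ is reducible and $n\geq\NEh(X)$, then
\[\Ah{n}(X\vee Y)=\E(X\vee Y;Y)\cdot\Ah{n}(X\vee Y;X),\]
with the factors in the \emph{opposite} order from Proposition \ref{prop:prod-multi}(\ref{F1}). Indeed, given $f=(f_X,f_Y)\in\Ah{n}(X\vee Y)$, reducibility yields $(f_X,i_Y)\in\Ah{n}(X\vee Y;Y)$; its homology matrix in each degree $k\leq n$ is lower-triangular with $r_X\circ f_X$ and $\id$ on the diagonal, so invertibility forces $r_X\circ f_X\in\Ah{n}(X)=\E(X)$. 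The matrix is then invertible in every degree, and Whitehead upgrades $(f_X,i_Y)$ to an element of $\E(X\vee Y;Y)$. The decomposition $f=(f_X,i_Y)\circ\bigl((f_X,i_Y)^{-1}\circ f\bigr)$ is the desired factorization, the second factor lying in $\Ah{n}(X\vee Y;X)$ by a direct check that it fixes $i_X$. The analog of Proposition \ref{prop:prod-multi}(\ref{F2}) is proved by the same strategy, giving all the factorizations needed.

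With these ingredients, the chain of equalities in the proof of Theorem \ref{thm:scn-prod} dualizes directly: iteratively peel off $\E(X_1\vee\cdots\vee X_m;\varPi_k)$-factors from the left, where $\varPi_k$ denotes the sub-wedge obtained by omitting $X_k$, to express $\Ah{N}(X_1\vee\cdots\vee X_m)$ as a product of subsets of $\E(X_1\vee\cdots\vee X_m)$. The main obstacle will be tracking the reversal of composition order consistently between the product and wedge settings (the $\E$-factors move to the opposite side, reflecting the shift from the universal property of products to that of coproducts) and ensuring that Whitehead's theorem is available at every upgrade step, which is precisely why the simply-connected hypothesis cannot be relaxed.
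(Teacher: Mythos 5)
Your proposal is correct and is essentially the paper's intended argument: the paper states Theorem \ref{thm:scn-sum} without proof as the Eckmann--Hilton dualization of Theorem \ref{thm:scn-prod}, and your dualization (wedge analogues of Proposition \ref{prop:prod-multi} via lower-triangular homology matrices, with Whitehead's theorem supplying the upgrade from homology isomorphism to homotopy equivalence, then the iterated peeling of $\E$-factors in the reversed composition order) carries out exactly that plan correctly.
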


\section{More on reducibility of $\A{n}(X\times Y)$}\label{sec:redu}
                    
In this section we further investigate conditions for the reducibility of the monoids $\A{n}(X\times Y)$.

Given a self-map $f$ of $X$, the induced endomorphism $\pi_k(f)$ is said to be \emph{nilpotent-central} if $\pi_k(f)$ is nilpotent and \emph{central} in the sense that it commutes with $\pi_k(g)$ for any $g\in [X,X]$. Let $R$ be a ring with the identity $1$. It is well-known that nilpotent elements in a unitary ring are \emph{quasi-regular}; that is, if $x\in R$ satisfies $x^n=0$, then $1-x$ (or $1+x$) is a unit.
We say an endomorphism of $R$ is \emph{radical} if it belongs to the Jacobson radical $J(R)$ of $R$, which consists of elements $x$ of $R$ such that $1+rxs$ is a unit for any $r,s\in R$.

\begin{lemma}\label{lem:NC}
	Let $a=u+t$ be an equation in a ring $(R,+,\cdot,1)$, where $u$ is a unit and $t$ is a nilpotent. If $at=ta$, then $a$ is a unit. 
	
	\begin{proof}
		It is direct to check that $at=ta$ is equivalent to $ut=tu$, by the equality $a=u+t$. Then $u^{-1}t=u^{-1}(tu)u^{-1}=u^{-1}(ut)u^{-1}=tu^{-1}$ is nilpotent, and hence $a=u(1+u^{-1}t)$ is a unit.
	\end{proof}
\end{lemma}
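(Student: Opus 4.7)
The plan is to reduce the claim to the standard fact that $1+x$ is a unit whenever $x$ is nilpotent, by producing a nilpotent element from $t$ that sits in a suitable position relative to $u$. The hypothesis $at=ta$ is the key input; the rest is a short manipulation.

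First I would unpack the commutation condition. Substituting $a=u+t$ into $at=ta$ gives $ut+t^2=tu+t^2$, so the hypothesis is equivalent to $ut=tu$. Consequently $u^{-1}$ also commutes with $t$: from $ut=tu$ we obtain $u^{-1}t = u^{-1}(tu)u^{-1} = u^{-1}(ut)u^{-1} = tu^{-1}$. This is the one place the commutativity hypothesis is really used; it propagates $t$ past $u^{-1}$.

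Next I would produce the nilpotent element $u^{-1}t$. If $t^n=0$ for some $n$, then because $u^{-1}$ and $t$ commute we get $(u^{-1}t)^n = u^{-n}t^n = 0$, so $u^{-1}t$ is nilpotent. Then $1+u^{-1}t$ has the explicit inverse $\sum_{k=0}^{n-1}(-u^{-1}t)^k$ (this is the standard geometric-series argument, which only requires $u^{-1}t$ to commute with itself, hence is always available). Therefore $1+u^{-1}t$ is a unit.

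Finally, factor $a=u+t=u(1+u^{-1}t)$. As a product of two units, $a$ is itself a unit, which completes the proof. I do not foresee any real obstacle: the only subtlety is to notice that the hypothesis $at=ta$, combined with the trivial fact $t^2=t^2$, forces $u$ and $t$ to commute, after which the nilpotent/unit argument runs automatically without assuming $R$ is commutative.
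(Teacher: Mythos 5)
Your proof is correct and follows essentially the same route as the paper's: deduce $ut=tu$ from $at=ta$, pass the commutation to $u^{-1}$, observe $u^{-1}t$ is nilpotent, and factor $a=u(1+u^{-1}t)$ as a product of units. You merely spell out the details (the identity $(u^{-1}t)^n=u^{-n}t^n=0$ and the geometric-series inverse) that the paper leaves implicit.
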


The following is a basic theorem of this section, other results are derived from it or its proof given here.

\begin{theorem}\label{thm:redu-prod}
Let $n\geq \NE(X)$. 
Suppose that every self-map of $Y$ that factors through $X$ induces nilpotent-central or radical endomorphisms of the first $n$ homotopy groups of $Y$.
Then $\A{n}(X\times Y)$ is reducible if and only if $f\in\A{n}(X\times Y)$ implies that $f_{XX}\in\E(X)$.
 
\begin{proof}
	Suppose $f=(f_X,f_Y)\in\A{n}(X\times Y)$ and $f_{XX}\in\E(X)$. It suffices to show that $f_{YY}\in\A{n}(Y)$. Let $g\in\E(X)$ be the homotopy inverse of $f_{XX}$.  For each $k\leq n$, let 
	\[\phi_k=[\phi_{ij}]=\begin{bmatrix}
		\phi_{11} & \phi_{12} \\
		\phi_{21} & \phi_{22}
	\end{bmatrix}\in\Aut(\pi_k(X)\oplus\pi_k(Y))\]
	be the inverse of $\pi_k(f)$. The matrix multiplication $\pi_k(f)\cdot\phi_k =\id_{\pi_k(X)\oplus\pi_k(Y)}$ implies that 
    \begin{align*}
		\pi_k(f_{XX})\phi_{12}+\pi_k(f_{XY})\phi_{22}&=0,\\
		\pi_k(f_{YX})\phi_{12}+\pi_k(f_{YY})\phi_{22}&=\id_{\pi_k(Y)}.
	\end{align*}
	Then $\phi_{12}=-\pi_k(g)\pi_k(f_{XY})\phi_{22}$ and hence 
	\[[\pi_k(f_{YY})-\pi_k(f_{YX}\circ g\circ f_{XY})]\phi_{22}=\id_{\pi_k(Y)}.\]
    Similarly, from the matrix multiplication $\phi_k\cdot \pi_k(f)=\id_{\pi_k(X)\oplus\pi_k(Y)}$ we deduce that 
	\[\phi_{22}[\pi_k(f_{YY})-\pi_k(f_{YX}\circ g\circ f_{XY})]=\id_{\pi_k(Y)}.\]
    It follows that $\phi_{22}\in\Aut(\pi_k(Y))$ and
    \begin{equation}\label{eq:NC}
    \pi_k(f_{YY})=\phi_{22}^{-1}+\pi_k(f_{YX}\circ g\circ f_{XY})=\phi_{22}^{-1}\big(\id_{\pi_k(Y)}+\phi_{22}\pi_k(f_{YX}\circ g\circ f_{XY})\big).\tag{$\ast$}
    \end{equation}

	If $\pi_k(f_{YX}\circ g\circ f_{XY})$ is radical, $\id_{\pi_k(Y)}+\phi_{22}\pi_k(f_{YX}\circ g\circ f_{XY})$ is a unit of $\End(\pi_k(Y))$, and hence $\pi_k(f_{YY})$ is invertible.
	If $\pi_k(f_{YX}\circ g\circ f_{XY})$ is nilpotent-central, the endomorphisms induced by $f_{YY}$ and $f_{YX}\circ g\circ f_{XY}$ commute, thence by Lemma \ref{lem:NC} we get that $\pi_k(f_{YY})$ is invertible.
\end{proof}
\end{theorem}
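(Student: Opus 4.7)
The plan is to prove the equivalence by handling the two directions separately, with the ``only if'' being immediate and the ``if'' requiring some matrix bookkeeping.

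First I would dispatch the ``only if'' direction: if $\A{n}(X\times Y)$ is reducible, then by Lemma \ref{lem:redu-equiv} every $f\in\A{n}(X\times Y)$ satisfies $f_{XX}\in\A{n}(X)$, and the hypothesis $n\geq\NE(X)$ forces $\A{n}(X)=\E(X)$, so $f_{XX}\in\E(X)$.

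For the ``if'' direction I would take $f=(f_X,f_Y)\in\A{n}(X\times Y)$, use the assumption to pick a homotopy inverse $g\in\E(X)$ of $f_{XX}$, and aim to show $f_{YY}\in\A{n}(Y)$ for each $k\leq n$. The induced automorphism $\pi_k(f)\in\Aut(\pi_k(X)\oplus\pi_k(Y))$ is represented by the $2\times 2$ matrix with entries $\pi_k(f_{XY\vphantom{Y}})$, and letting $\phi_k=[\phi_{ij}]$ denote its inverse, I would expand both $\pi_k(f)\cdot\phi_k=\id$ and $\phi_k\cdot\pi_k(f)=\id$, use invertibility of $\pi_k(f_{XX})$ to solve for $\phi_{12}$ in terms of $\phi_{22}$, and derive the two-sided identity
\[\phi_{22}\bigl(\pi_k(f_{YY})-\pi_k(f_{YX}\circ g\circ f_{XY})\bigr)=\id_{\pi_k(Y)}=\bigl(\pi_k(f_{YY})-\pi_k(f_{YX}\circ g\circ f_{XY})\bigr)\phi_{22}.\]
This shows $\phi_{22}\in\Aut(\pi_k(Y))$ and yields the key formula
\[\pi_k(f_{YY})=\phi_{22}^{-1}+\pi_k(f_{YX}\circ g\circ f_{XY}).\]

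The crucial observation is that the composite $f_{YX}\circ g\circ f_{XY}\colon Y\to Y$ factors through $X$, so the standing hypothesis applies. If $\pi_k(f_{YX}\circ g\circ f_{XY})$ is radical, then $\id_{\pi_k(Y)}+\phi_{22}\cdot\pi_k(f_{YX}\circ g\circ f_{XY})$ is a unit in $\End(\pi_k(Y))$ by definition of the Jacobson radical, so the factorization $\pi_k(f_{YY})=\phi_{22}^{-1}\bigl(\id_{\pi_k(Y)}+\phi_{22}\pi_k(f_{YX}\circ g\circ f_{XY})\bigr)$ exhibits $\pi_k(f_{YY})$ as a unit. If instead $\pi_k(f_{YX}\circ g\circ f_{XY})$ is nilpotent-central, its centrality ensures it commutes with $\pi_k(f_{YY})$, so the decomposition ``unit plus commuting nilpotent'' combined with Lemma \ref{lem:NC} again gives $\pi_k(f_{YY})\in\Aut(\pi_k(Y))$. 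Either way $f_{YY}\in\A{n}(Y)$, and Lemma \ref{lem:redu-equiv} finishes the reducibility. The main obstacle I anticipate is the careful matrix manipulation: one must use both the left- and right-inverse equations for $\pi_k(f)$ to conclude two-sided invertibility of $\phi_{22}$, and must arrange the factorization of $\pi_k(f_{YY})$ in a form compatible with whichever of the two algebraic hypotheses (radical or nilpotent-central) is in force.
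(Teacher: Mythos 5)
Your proposal is correct and follows essentially the same argument as the paper: the trivial ``only if'' direction via Lemma \ref{lem:redu-equiv}, then for the ``if'' direction the same expansion of both inverse equations for $\pi_k(f)$, the same formula $\pi_k(f_{YY})=\phi_{22}^{-1}+\pi_k(f_{YX}\circ g\circ f_{XY})$, and the same case split (radical via the Jacobson-radical definition, nilpotent-central via Lemma \ref{lem:NC}). No gaps; nothing further needed.
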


If the image $\mathrm{im}(\beta_n^Y)$ of the homotopy representation 
\[\beta_n^Y\colon [Y,Y]\to \prod_{k=1}^n\pi_k(Y),\quad f\mapsto (\pi_1(f),\cdots,\pi_n(f))\] 
is a commutative subring, Theorem \ref{thm:redu-prod} can be modified as follows. 

\begin{corollary}\label{cor:image-commute}
Let $n\geq \NE(X)$. 
Suppose that $\mathrm{im}(\beta_n^Y)$ is commutative and that every self-map of $Y$ that factors through $X$ induces nilpotent endomophisms of the first $n$ homotopy groups of $Y$.
Then $\A{n}(X\times Y)$ is reducible if and only if $f\in\A{n}(X\times Y)$ implies that $f_{XX}\in\E(X)$.

\end{corollary}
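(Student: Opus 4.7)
The plan is to piggyback on the proof of Theorem~\ref{thm:redu-prod}, changing only the step that invokes Lemma~\ref{lem:NC}. The forward direction is immediate: if $\A{n}(X\times Y)$ is reducible, then by Lemma~\ref{lem:redu-equiv} every $f\in\A{n}(X\times Y)$ satisfies $f_{XX}\in\A{n}(X)$, and since $n\geq\NE(X)$ this monoid coincides with $\E(X)$.

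For the reverse direction, suppose $f=(f_X,f_Y)\in\A{n}(X\times Y)$ with $f_{XX}\in\E(X)$, and let $g$ be a homotopy inverse of $f_{XX}$. The goal is to show $f_{YY}\in\A{n}(Y)$. Fixing $k\leq n$ and inverting the $2\times 2$ matrix representation of $\pi_k(f)$ exactly as in the proof of Theorem~\ref{thm:redu-prod}, I obtain the identity
\[\pi_k(f_{YY})=\phi_{22}^{-1}+\pi_k(f_{YX}\circ g\circ f_{XY}),\]
where $\phi_{22}$ is the $(2,2)$-block of $\pi_k(f)^{-1}$ and is already known to be invertible in $\End(\pi_k(Y))$. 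It then suffices to apply Lemma~\ref{lem:NC} with $u=\phi_{22}^{-1}$ and $t=\pi_k(f_{YX}\circ g\circ f_{XY})$.

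The nilpotency hypothesis gives that $t$ is nilpotent, since $f_{YX}\circ g\circ f_{XY}$ is a self-map of $Y$ that factors through $X$. To legitimately invoke Lemma~\ref{lem:NC} I must verify that $\pi_k(f_{YY})$ commutes with $t$, and this is precisely where the new hypothesis is used: both $\pi_k(f_{YY})$ and $t$ are induced by self-maps of $Y$, hence both lie in $\mathrm{im}(\beta_n^Y)$, which by assumption is commutative. Lemma~\ref{lem:NC} then yields that $\pi_k(f_{YY})$ is a unit in $\End(\pi_k(Y))$ for every $k\leq n$, so $f_{YY}\in\A{n}(Y)$.

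I do not expect any serious obstacle — the argument is essentially a repackaging of the nilpotent-central case of Theorem~\ref{thm:redu-prod}. The only conceptual point worth emphasising is that one does not need $t$ to be central in the full ring $\End(\pi_k(Y))$; commuting with the single element $\pi_k(f_{YY})$ is enough, and commutativity of $\mathrm{im}(\beta_n^Y)$ supplies exactly this restricted centrality for free.
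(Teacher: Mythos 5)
Your argument is correct and is exactly the intended modification of Theorem~\ref{thm:redu-prod}: the paper derives the corollary by rerunning that proof up to identity \eqref{eq:NC} and replacing the centrality hypothesis with the commutativity of $\mathrm{im}(\beta_n^Y)$, which is precisely how you supply the commutation needed for Lemma~\ref{lem:NC}. Your closing remark that only commutation of $t$ with $\pi_k(f_{YY})$ (not full centrality) is needed is also the point the paper exploits.
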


Projective spaces and Lens spaces of different dimensions satisfy such the second property described in Corollary \ref{cor:image-commute}.

\begin{lemma}\label{lem:projSP}
	Let $\mathbb{F}=\mathbb{R,C,H}$ and denote $d=\dim_\R(\mathbb{F})=1,2,4$, respectively. Set $\ZF=\Z/2$ if $\mathbb{F}=\R$; $\ZF=\Z$ if $\mathbb{F}=\mathbb{C,H}$.  There hold some basic facts. 	
	\begin{enumerate}[(1)]
		\item\label{P1}  Partial homotopy groups of $\FP{m}~(m\geq 2)$ are give by:
	 \[\pi_k(\FP{n})\cong \left\{\begin{array}{ll}
		\ZF&k=d;\\
		 \Z&k=d(n+1)-1;\\
		 0& k<d\text{ or } d<k<d(n+1)-1.
	 \end{array}\right.\]
	 \item\label{P2} If $2\leq m<n\leq \infty$, then every map $\varphi\colon \FP{n}\to\FP{m}$ induces a trivial homomorphism $\varphi^\ast\colon H^\ast(\FP{m};\ZF)\to H^\ast(\FP{n};\ZF)$.
	 \item\label{P3} For any $m,n$, there hold a chain of natural isomorphisms 
	    \[ \pi_d(\FP{n})\cong H_d(\FP{n})\cong H_d(\FP{n};\ZF) \cong H^d(\FP{n};\ZF).\] 
	\end{enumerate}
	\begin{proof}
		(\ref{P1}) follows by the associated long exact sequence of homotopy groups associated to the Hopf fibrations
		\[S^{d-1}\to S^{d(n+1)-1}\to \FP{n}.\]

		(\ref{P2}) is a direct consequence of naturality of ring isomorphismsm \[H^\ast(\FP{m};\ZF)\cong \ZF[x]/(x^{dm+1}).\]

		For the chain in (\ref{P3}), the first natural isomorphism is induced by the Hurewicz map: if $\mathbb{F}=\R$, $\pi_1(\R P^m)\cong H_1(\R P^m)$ for $\pi_1(\R P^m)\cong \Z/2$; if $\mathbb{F}=\mathbb{C,H}$, the first natural isomorphism is due to the Hurewicz theorem. The second and the third natural isomorphisms hold by the universal coefficient theorems for homology and cohomology, respectively. 
	\end{proof}
\end{lemma}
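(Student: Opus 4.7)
My plan is to treat the three parts separately. For part (\ref{P1}), I would feed the Hopf fibration $S^{d-1}\to S^{d(n+1)-1}\to \FP{n}$ into the long exact sequence of homotopy groups and read off $\pi_k(\FP{n})$ region by region. In the range $k<d$, both flanking groups vanish by connectivity, forcing $\pi_k(\FP{n})=0$. At $k=d$, the connecting map yields $\pi_d(\FP{n})\cong \pi_{d-1}(S^{d-1})\cong\ZF$ (with the $d=1$ case reading as $\pi_0(S^0)\cong\Z/2$), using that the hypothesis $m\geq 2$ forces $\pi_d(S^{d(n+1)-1})=0$. At $k=d(n+1)-1$ the copy of $\Z$ comes from $\pi_{d(n+1)-1}(S^{d(n+1)-1})$, and the intermediate range is pinned down by analogous sphere-vanishing inputs.

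For part (\ref{P2}), the plan exploits the truncated-polynomial ring structures $H^\ast(\FP{m};\ZF)\cong\ZF[x]/(x^{m+1})$ and $H^\ast(\FP{n};\ZF)\cong\ZF[y]/(y^{n+1})$ with $|x|=|y|=d$. By multiplicativity, $\varphi^\ast(x)=c\,y$ for some $c\in\ZF$. Applying $\varphi^\ast$ to the relation $x^{m+1}=0$ gives $c^{m+1}y^{m+1}=0$; since $m+1\leq n$ the class $y^{m+1}$ is nonzero in $H^\ast(\FP{n};\ZF)$, so $c^{m+1}=0$ in $\ZF$. But $\ZF$ (either $\Z$ or $\Z/2$) has no nonzero nilpotents, forcing $c=0$, and hence $\varphi^\ast=0$ on every positive degree.

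For part (\ref{P3}), I would chain three standard naturality results. First, the Hurewicz homomorphism gives $\pi_d(\FP{n})\cong H_d(\FP{n};\Z)$: immediate when $\mathbb{F}=\C$ or $\mathbb{H}$ because $\FP{n}$ is then $(d-1)$-connected, while for $\mathbb{F}=\R$ (so $d=1$) the Hurewicz map is the abelianization map, already an isomorphism since $\pi_1(\R P^n)\cong\Z/2$ is abelian. Second, change of coefficients gives $H_d(\FP{n};\Z)\cong H_d(\FP{n};\ZF)$, tautological when $\ZF=\Z$ and a short universal-coefficient calculation when $\ZF=\Z/2$. Third, the universal coefficient theorem for cohomology identifies $H_d$ with $H^d$ in the relevant coefficients, because the pertinent $\mathrm{Ext}$ term vanishes in every case at hand.

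The main subtlety I anticipate is the real case $d=1$: the fibre $S^0$ is disconnected, so in (\ref{P1}) the generator of $\pi_1(\R P^n)$ must be extracted from the boundary of $\pi_0$ of the fibre rather than from a pure connectivity argument, and in (\ref{P3}) the group of interest is torsion, so the Hurewicz and universal-coefficient steps must be verified by hand rather than quoted in their cleanest forms.
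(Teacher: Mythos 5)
Your overall route coincides with the paper's on all three parts: (\ref{P1}) from the long exact sequence of the Hopf fibration, (\ref{P2}) from the truncated-polynomial cup-product structure and multiplicativity of $\varphi^\ast$, and (\ref{P3}) from the Hurewicz map plus the two universal coefficient theorems. Your write-ups of (\ref{P2}) and (\ref{P3}) are correct and simply make explicit what the paper leaves as one-line citations, including the two $d=1$ subtleties you flag (the nilpotent-free coefficient ring argument forcing $c=0$, and the abelian-$\pi_1$/vanishing-$\mathrm{Ext}$ checks).

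There is, however, a genuine problem with the step in (\ref{P1}) that "the intermediate range is pinned down by analogous sphere-vanishing inputs", in the case $\mathbb{F}=\mathbb{H}$. There the fibre of the Hopf fibration is $S^{3}$, and for $d<k<d(n+1)-1$ the exact sequence gives $\pi_k(\mathbb{H}P^n)\cong\pi_{k-1}(S^{3})$, which is generally nonzero: for instance $\pi_5(\mathbb{H}P^n)\cong\pi_4(S^{3})\cong\Z/2$ for $n\geq 2$, and in the top degree $d(n+1)-1=4n+3$ one only gets an extension of $\pi_{4n+2}(S^{3})$ by $\Z$, not $\Z$ itself. So the vanishing you assert cannot be extracted from the long exact sequence because it is false; item (\ref{P1}) as stated is correct only for $\mathbb{F}=\R,\C$, where the fibre is $S^{0}$ or $S^{1}$ and its higher homotopy vanishes. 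To be fair, the paper's own one-sentence proof glosses over exactly the same point, and nothing downstream breaks: in Example \ref{ex:ProjSp} the quaternionic (and complex) case only uses homotopy and cohomology in degrees $\leq d$, while the full strength of (\ref{P1}) is invoked only for $\mathbb{F}=\R$, where your argument is complete. But as a proof of the lemma as stated, your sphere-vanishing step must be restricted to $\R$ and $\C$ (equivalently, the quaternionic clause of the statement needs to be weakened), rather than treated as "analogous" across all three fields.
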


The self-closeness numbers of projective spaces $\FP{n}$ over fields $\mathbb{F=R,C,H}$ and lens spaces $L^{2n+1}(p)=S^{2n+1}/\zp{}$ with $p$ a prime are known, see \citep[Theorem 6]{OY17} and \citep[Theorem 13,14]{OY20}.

\begin{example}\label{ex:ProjSp}
Let $2\leq m<n$ be positive integers.
\begin{enumerate}
	\item $\NE(\FP{m}\times \FP{n})=\NE(\FP{n})=n,2,4$ for $\mathbb{F}=\R,\mathbb{C,H}$, respectively.
	\item $\NE(L^{2m+1}(p)\times L^{2n+1}(q))=2n+1$, where $p,q$ are primes.
\end{enumerate}

\begin{proof}
(1) By Theorem \ref{thm:scn-prod} it suffices to show that $\A{n}(\FP{m}\times \FP{n})$ are reducible. Suppose $f=(f_1,,f_2)\in\A{n}(\FP{m}\times \FP{n})$. By Lemma \ref{lem:projSP} (\ref{P3}), $H^d(f;\ZF)\in \Aut\big(H^d(\FP{m}\times \FP{n};\ZF)\big)$. By Lemma \ref{lem:projSP} (\ref{P2}), the homomorphisms $f_{12}\colon \FP{n}\to \FP{m}$ induces a trivial endomorphism on $H^d(-;\ZF)$, hence $H^d(f;\ZF)$ can be represented by a lower-triangular matrix, and we then get \[H^d(f_{11};\ZF)\in \Aut\big(H^d(\FP{m};\ZF)\big),\quad H^d(f_{22};\ZF)\in \Aut\big(H^d(\FP{n};\ZF)\big),\]
which are equivalent to $\pi_d(f_{11}),\pi_d(f_{22})$ are automorphisms.

If $\mathbb{F}=\mathbb{C,H}$, we are done. If $\mathbb{F}=\R$, firstly $f\in\A{n}(\R P^{m}\times \R P^{n})\subseteq \A{m}(\R P^{m}\times \R P^{n})$ implies that $f_{11}\in \A{m}(\R P^{m})=\E(\R P^{m})$.
By Lemma \ref{lem:projSP} (\ref{P1}), we need to show that every composition $\R P^{n}\to \R P^{m}\to \R P^{n}$ induces a zero endomorphism on $\pi_n(-)$. If $\pi_n(\R P^{m})\cong \pi_n(S^m)$ is finite, this is clear. So by Serre's finiteness theorem on homotopy groups of spheres, it suffices to consider the case $m$ is even and $n=2m-1$. Note that in this case, the Hurewicz homomorphism $\pi_n(\R P^n)\to H_n(\R P^n)$ is a monomorphism. Consider the following commutative diagram induced by the Hurewicz homomorphism:
\[\begin{tikzcd}
	\pi_n(\R P^n)\ar[d,tail]\ar[r]&\pi_n(\R P^m)\ar[r]\ar[d,tail]&\pi_n(\R P^n)\ar[d,tail]\\
	H_n(\R P^n)\ar[r]&H_n(\R P^m)=0\ar[r]&H_n(\R P^n)
\end{tikzcd}\] 
It follows that the endomorphism of $\pi_n(\R P^n)$ induced by self-maps of $\R P^n$ that factors through $\R P^m$ is trivial.
Thus applying Theorem \ref{thm:redu-prod} we get that $\A{n}(\R P^{m}\times \R P^{n})$ is reducible. 

(2) By Theorem \ref{thm:redu-prod} and the arguments in (1) with $\mathbb{F=R}$, it suffices to show that every self-map of $L^{2n+1}(p)$ that factors through $L^{2m+1}(q)$ induces a trivial endomorphism on $\pi_{2n+1}(-)$. This is clear since $\pi_{2n+1}(L^{2m+1}(q))\cong \pi_{2n+1}(S^{2m+1})$ is finite.
	\end{proof}
\end{example}

 The following truncated versions of the concepts \emph{homotopically/homologically distance} are essentially due to \Pave \cite{Pavesic07}.

\begin{definition}\label{def:n-dist}
	Let $X,Y$ be two complexes and let $1\leq n\leq \infty$ be fixed. We say
	 $X$ and $Y$ are \emph{homotopically} (resp. \emph{homologically}) \emph{$n$-distant} if every self-map of $Y$ that factors through $X$ induces a nilpotent endomorphism of $\pi_k(Y)$ (resp. $H_k(Y)$) for each $k\leq n$.

	 If $n=\infty$, we just say $X$ and $Y$ are homotopically/homologically distant. 
	\end{definition}

Both the above relations are symmetric for each $1\leq n\leq \infty$ and $n$-distance implies $m$-distance for any $n\geq m$. Moreover, if $\pi_1(X)$ is solvable or $\pi_1(f)$ is nilpotent, then for each $n$, $X$ and $Y$ are homotopically $n$-distant if and only if $X$ and $Y$ are homologically $n$-distant (cf. \citep[Theorem 3.5, Lemma 3.4]{Pavesic07}).

Give a unital ring $R$, denote by $N(R)$ the set of nilpotent elements of $R$. There are some notions in ring theory. We say that $R$ is \emph{reduced} if $N(R)=\{0\}$,  is \emph{central reduced} if $N(R)\subseteq Z(R)$, where $Z(R)$ denotes the center of $R$, and is \emph{$J$-reduced} if $N(R)\subseteq J(R)$. Reduced ring, central reduced rings are obviously $J$-reduced, and rings with these properties were well studied, such as \cite{UHKH13,CGHH16,LP18}. Every commutative ring and local ring are obviously $J$-reduced, and finite (sub)direct product of $J$-reduced rings are $J$-reduced, by \citep[Corollary 2.10]{CGHH16}.  
If the endomorphism ring $\prod_{k=1}^n\End(\pi_k(Y))$ is $J$-reduced, then the assumptions in Theorem \ref{thm:redu-prod} can be modified as follows. 
  
\begin{proposition}\label{prop:J-redu}
	Let $n\geq \NE(X)$ and let $Y$ be a space such that the endomorphism ring $\End(\pi_k(Y))$ is $J$-reduced for each $k\leq n$. If $X$ and $Y$ are homotopically $n$-distant, then $\A{n}(X\times Y)$ is reducible if and only if $f\in\A{n}(X\times Y)$ implies that $f_{XX}\in\E(X)$.
\end{proposition}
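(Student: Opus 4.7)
The plan is to derive Proposition \ref{prop:J-redu} as an immediate consequence of Theorem \ref{thm:redu-prod} by checking the ``radical'' branch of its hypothesis. That theorem already states precisely the biconditional we want, provided one can show that every self-map $g\colon Y\to Y$ factoring through $X$ induces, on each $\pi_k(Y)$ with $k\leq n$, an endomorphism which is either nilpotent-central or radical; so the only real work is to verify that the radical condition is guaranteed by our two hypotheses.

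For the forward (``only if'') direction, no extra argument is needed: if $\A{n}(X\times Y)$ is reducible and $f\in\A{n}(X\times Y)$, then Lemma \ref{lem:redu-equiv} yields $f_{XX}\in\A{n}(X)$, and the assumption $n\geq \NE(X)$ upgrades this to $f_{XX}\in\E(X)$. For the converse, fix a self-map $g\colon Y\to Y$ factoring through $X$ and an integer $k\leq n$. Homotopic $n$-distance of $X$ and $Y$ (Definition \ref{def:n-dist}) forces $\pi_k(g)$ to be nilpotent in $\End(\pi_k(Y))$; since this endomorphism ring is $J$-reduced by assumption, $N(\End(\pi_k(Y)))\subseteq J(\End(\pi_k(Y)))$, and hence $\pi_k(g)$ is radical. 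Consequently the hypothesis of Theorem \ref{thm:redu-prod} is met in its radical form, and the ``if'' implication of that theorem delivers the reducibility of $\A{n}(X\times Y)$ from the condition $f_{XX}\in\E(X)$.

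There is no substantial obstacle here: the argument is a clean combination of the defining containment $N(R)\subseteq J(R)$ for $J$-reduced rings with Theorem \ref{thm:redu-prod}. The only point worth flagging is that the $J$-reducedness hypothesis must be applied factorwise at each $\pi_k(Y)$ for $k\leq n$, which aligns exactly with the factorwise radical condition used in the earlier theorem, so no uniformity or finiteness issue arises.
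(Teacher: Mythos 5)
Your argument is correct and is exactly the paper's intended (and essentially only) proof: the forward direction is Lemma \ref{lem:redu-equiv} plus $n\geq\NE(X)$, and the converse follows by noting that homotopic $n$-distance makes each $\pi_k(g)$ nilpotent, hence radical by $N(\End(\pi_k(Y)))\subseteq J(\End(\pi_k(Y)))$, so the radical branch of Theorem \ref{thm:redu-prod} applies. Nothing is missing.
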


Let $G=\oplus_p G_p$ be a finite abelian group, where $G_p$ are the $p$-primary components. Then $\End(G)\cong \oplus_p\End(G_p)$. Note that $\End(G)$ is $J$-reduced if and only if so is each $\End(G_p)$. For a $p$-group $H$, denote by $p^sH[p]=\{p^s x:x\in H, p^{s+1}x=0\}$ and let $f_s(H)~(k\geq 0)$ be the $s$-th \emph{Ulm-Kaplansky invariant} of $H$ defined by 
\begin{equation*}
f_s(H)=\dim_{\zp{}}\big(p^sH[p])/p^{s+1}H[p]\big).	
\end{equation*}

\begin{lemma}\label{lem:NJ}
	Let $H\cong \zp{r_1}\oplus\cdots\oplus\zp{r_m}$, $1\leq r_1\leq \cdots\leq r_m<\infty$. Let $f_s(H)$ be defined as above, $s\geq 0$.
	\begin{enumerate}
		\item If $\End(H)/J(\End(H))$ is reduced, then $\End(H)$ is $J$-reduced.
		\item $\End(H)/J(\End(H))$ is reduced if and only if $f_s(H)=0$ or $1$ for each $s\geq 0$.
		\item For each $s\geq 0$, $f_s(H)=\#\{r_i:r_i>s\}-\#\{r_i:r_i>s+1\}$, where $\# S$ denotes the cardinality of the set $S$.
		\item $f_s(H)=0$ or $1$ for each $s\geq 0$ if and only if $H$ is a subgroup of $\bigoplus_{i\geq 1}\zp{i}$.
	\end{enumerate}
	\begin{proof}
		(1) refers to \citep[Lemma 2.2]{CGHH16}.

		(2) By \citep[Corollary 20.14]{KMT03}, there holds a ring isomorphism 
		\[\End(H)/J(\End(H))\cong \prod_{k\geq 0}M_{f_s(H)}(\zp{}),\]
		where $M_{t}(\zp{})$ denotes the ring of  $t\times t$ matrices over the field $\zp{}$.
        The equivalence in the lemma then follows from the fact that $M_{f_s(H)}(\zp{})$ is reduced if and only if $f_s(H)=0,1$. 

		(3) For any integers $s$, there is an isomorphism of groups 
		\[p^sH\cong\bigoplus_{i=1}^m p^s\zp{r_i}\cong\bigoplus_{i=1}^m\zp{r_i-s},\] 
		where $\zp{r_i-s}=0$ if $r_i\leq s$ for each $i=1,\cdots,m$. It follows that the dimension of $p^sH[p]$ equals to the number of $r_i$ that are greater than $s$. The equality then follows by the definition of $f_s(H)$.

		(4) If $H\leq\bigoplus_{i=1}^{\infty}\zp{i}$, then the powers $r_i$ satisfy $r_1<\cdots<r_m$. By (3) we compute that 
		\[f_{s}(H)=\left\{\begin{array}{ll}
		1&s=r_i-1,i=1,\cdots,m;\\
		0&\text{otherwise}.
		\end{array} \right.\]

Conversely, assume that $H$ contains two cyclic direct summand of the same orders, say $r_1=r_2$. Then $\#\{r_i:r_i>r_1\}\leq m-2$ and hence\[f_{r_1-1}(H)=m-\#\{r_i:r_i>r_1\}\geq 2.\]
	\end{proof}
\end{lemma}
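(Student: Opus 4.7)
The plan is to handle the four parts in sequence. For (1), I would simply invoke \citep[Lemma 2.2]{CGHH16}: the implication that vanishing of nilpotents modulo $J(R)$ forces $N(R)\subseteq J(R)$ is a general ring-theoretic fact that deserves no reproof here.

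The substantive step is (2). The main engine is the Kaplansky-style identification of the semisimple quotient
\[
\End(H)/J(\End(H)) \;\cong\; \prod_{s\geq 0} M_{f_s(H)}(\zp{}),
\]
which I would import from \citep[Corollary 20.14]{KMT03}. Granted this, the conclusion is immediate: a product of rings is reduced iff each factor is, and $M_t(\zp{})$ is reduced iff $t\leq 1$ (for $t\geq 2$ the matrix unit $E_{12}$ is a nonzero square-zero element). I expect this to be the hardest step, purely because locating the correct reference for the decomposition is the only real work; the ring manipulation around it is essentially nothing.

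For (3), I would compute both sides of the formula directly from the cyclic decomposition of $H$. Writing $p^s H \cong \bigoplus_{i=1}^m \zp{\max(r_i-s,0)}$ (with the convention $\zp{0}=0$), the torsion subgroup $p^s H[p]$ is a $\zp{}$-vector space with one basis vector contributed by each index $i$ with $r_i>s$, so $\dim_{\zp{}} p^s H[p]=\#\{i:r_i>s\}$. Subtracting the corresponding dimension for $s+1$ from that for $s$ and invoking the definition of $f_s(H)$ as a quotient dimension yields the stated difference.

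Finally, (4) will follow directly from (3). If $H\leq\bigoplus_{i\geq 1}\zp{i}$ then its elementary divisors are pairwise distinct and can be arranged strictly increasing, whence $\#\{r_i>s\}-\#\{r_i>s+1\}\in\{0,1\}$ for every $s$. Conversely, if some equality $r_i=r_j$ holds with $i\neq j$, the choice $s=r_i-1$ makes the two counts in (3) differ by at least $2$, so $f_s(H)\geq 2$. There is no real obstacle here; it is pure bookkeeping on the multiset $\{r_1,\dots,r_m\}$.
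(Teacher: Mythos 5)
Your proposal is correct and takes essentially the same route as the paper's own proof: the same citations for (1) and (2), the same computation of $\dim_{\Z/p\Z}p^sH[p]=\#\{i:r_i>s\}$ for (3), and the same two-directional bookkeeping on the exponents $r_1,\dots,r_m$ for (4). The only difference is cosmetic (your explicit square-zero matrix unit witnessing non-reducedness of $M_t(\Z/p\Z)$ for $t\geq 2$), so there is nothing further to add.
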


\begin{proposition}\label{prop:NJ}
	Let $n\geq \NE(X)$ and let the $p$-primary component $\pi_k(Y;p)$ of $\pi_k(Y)$ be isomorphic to a subgroup of $\bigoplus_{i=1}^{\infty}\zp{i}$ for each prime $p$ and each $k\leq n$.
	If $X$ and $Y$ is homotopically $n$-distant, then $\A{n}(X\times Y)$ is reducible if and only if $f\in\A{n}(X\times Y)$ implies that $f_{XX}\in\E(X)$.
\begin{proof}
	By Proposition \ref{prop:J-redu} and Lemma \ref{lem:NJ}.
\end{proof}
\end{proposition}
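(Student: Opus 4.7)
The plan is straightforward: I aim to verify the hypothesis of Proposition \ref{prop:J-redu} — namely that $\End(\pi_k(Y))$ is $J$-reduced for each $k\leq n$ — and then invoke that result. The homotopical $n$-distance of $X,Y$ and the inequality $n\geq \NE(X)$ are hypotheses shared by both propositions, so the entire content of the argument lies in the $J$-reducedness check.

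First I would decompose each $\pi_k(Y)$ along its $p$-primary components, obtaining a ring isomorphism $\End(\pi_k(Y)) \cong \prod_p \End(\pi_k(Y;p))$. Since (sub)direct products of $J$-reduced rings are $J$-reduced by \citep[Corollary 2.10]{CGHH16}, it suffices to verify $J$-reducedness prime by prime. Fixing a prime $p$, the hypothesis that $\pi_k(Y;p)$ embeds into $\bigoplus_{i\geq 1}\zp{i}$ means that its cyclic summands have pairwise distinct orders. Lemma \ref{lem:NJ}(4) then yields $f_s(\pi_k(Y;p))\in\{0,1\}$ for every $s\geq 0$, Lemma \ref{lem:NJ}(2) converts this into the statement that $\End(\pi_k(Y;p))/J(\End(\pi_k(Y;p)))$ is reduced, and Lemma \ref{lem:NJ}(1) upgrades this to the $J$-reducedness of $\End(\pi_k(Y;p))$ itself. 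Assembling across primes gives the required $J$-reducedness of $\End(\pi_k(Y))$, after which Proposition \ref{prop:J-redu} delivers the conclusion at once.

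The only delicate point I anticipate is that Lemma \ref{lem:NJ} is phrased for finite direct sums of cyclic $p$-groups, whereas the hypothesis permits the $p$-primary components $\pi_k(Y;p)$ to be of countably infinite rank. The strictly increasing order condition forced by the embedding into $\bigoplus_{i\geq 1}\zp{i}$ keeps the Ulm-Kaplansky computation in Lemma \ref{lem:NJ}(3)--(4) valid in this broader setting, so I do not expect any substantive obstacle; this is the single bookkeeping step beyond a direct citation chain.
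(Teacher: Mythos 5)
Your proposal is correct and takes essentially the same route as the paper: the paper's proof is precisely the citation of Proposition \ref{prop:J-redu} together with Lemma \ref{lem:NJ}, and your argument simply makes explicit the intermediate steps (the prime-by-prime decomposition $\End(\pi_k(Y))\cong\prod_p\End(\pi_k(Y;p))$ sketched before Lemma \ref{lem:NJ}, the chain (4)$\Rightarrow$(2)$\Rightarrow$(1) of that lemma, and the stability of $J$-reducedness under products via \citep[Corollary 2.10]{CGHH16}). The finite-versus-infinite rank caveat you flag is likewise glossed over in the paper, so it does not distinguish your argument from the original.
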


A special case of Proposition \ref{prop:NJ} is that each $p$-primary component $\pi_k(Y;p)$ is a cyclic group. In this case, $\End(\pi_k(Y))$ is actually commutative, by \citep[Theorem 1]{SS51}. Thus we have

\begin{corollary}\label{cor:grp-commute}
	Suppose that $\pi_k(Y)\cong \bigoplus_{p\in S_k}\zp{r_p}$ for each $k\leq n$, where $S_k$ is a set of (different) primes, and that $X$ and $Y$ are homotopically $n$-distant, $n\geq \NE(X)$. 
	Then $\A{n}(X\times Y)$ is reducible if and only if $f\in\A{n}(X\times Y)$ implies that $f_{XX}\in\E(X)$. 
\end{corollary}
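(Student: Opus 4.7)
The plan is to derive this as a direct specialization of Proposition \ref{prop:NJ}. Under the stated hypothesis on the shape of $\pi_k(Y)$, the assumption of Proposition \ref{prop:NJ} on primary components is immediate: for each $k \leq n$ and each prime $p$, the $p$-primary component $\pi_k(Y;p)$ is either trivial (when $p \notin S_k$) or the single cyclic $p$-group $\zp{r_p}$, which embeds into $\bigoplus_{i\geq 1}\zp{i}$ as the summand in degree $r_p$. Together with the assumed homotopical $n$-distance of $X$ and $Y$ and the bound $n \geq \NE(X)$, all hypotheses of Proposition \ref{prop:NJ} are in place, and the equivalence transfers verbatim.

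A more self-contained route, which is the one hinted at in the sentence immediately preceding the corollary, bypasses the Ulm--Kaplansky invariants and instead goes through Corollary \ref{cor:image-commute}. By the Szele--Szendrei theorem cited from [SS51] (or by direct computation, since $\End(\zp{r}) \cong \zp{r}$), the endomorphism ring of any cyclic $p$-group is commutative. Because $\pi_k(Y)$ decomposes as a sum of cyclic $p$-groups over \emph{distinct} primes, its endomorphism ring splits as $\prod_{p \in S_k} \End(\zp{r_p})$ and is therefore commutative; consequently the image of the homotopy representation $\beta_n^Y$ lies inside a commutative ring. The other half of the hypothesis of Corollary \ref{cor:image-commute} — that every self-map of $Y$ factoring through $X$ induces a nilpotent endomorphism on $\pi_k(Y)$ for $k \leq n$ — is literally the definition of homotopical $n$-distance, which is assumed.

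There is no real obstacle: the content of the result sits in Proposition \ref{prop:NJ} (and in Theorem \ref{thm:redu-prod} upstream), and the corollary just packages the most familiar instance of the hypothesis. The only point worth stating carefully in the writeup is that ``$S_k$ a set of distinct primes'' forces each $p$-primary component to be a \emph{single} cyclic summand, which is exactly what makes both the embedding into $\bigoplus_{i\geq 1}\zp{i}$ and the Szele--Szendrei commutativity applicable.
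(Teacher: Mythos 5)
Your proposal is correct and matches the paper's own (one-sentence) derivation: the corollary is exactly the special case of Proposition \ref{prop:NJ} where each primary component is a single cyclic group, with the commutativity of $\End(\pi_k(Y))\cong\prod_{p\in S_k}\End(\zp{r_p})$ (via [SS51], or since commutative rings are $J$-reduced) providing the same shortcut the paper notes. Both of your routes are sound and neither departs from the paper's argument in any essential way.
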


\section{Applications}\label{sec:Appl}

This section applies discussions in Section \ref{sec:redu} to determine self-closeness numbers of products of some special spaces. 

\subsection{$X=M_n(G)$ or $X=K_n(G)$}\label{sec:Moore-EM}
If every homomorphism $\pi_n(X)\to\pi_n(Y)$ is induced by some map $X\to Y$, then we can reduce assumptions in Theorem \ref{thm:redu-prod}. In this subsection we mainly consider the cases where $X$ is a Moore space or an Eilenberg-MacLane space.

Given an abelian group $G$, denote by $M_n(G)$ the Moore space with a unique nontrivial reduced integral homology group $G$ in dimension $n$. It is clear that $\NEh(M_n(G))=n$ for any $n\geq 1$.
The following is an immediate result of Theorem \ref{thm:scn-sum}.
\begin{example}\label{ex:Moore}
	Let $n_1,\cdots,n_m\geq 2$ be positive integers and let $G_1,\cdots,G_m$ be abelian groups. Then 
	\[\NEh(M_{n_1}(G_1)\vee \cdots \vee M_{n_m}(G_m))=\max\{n_1,\cdots,n_m\}.\]
\end{example}

The universal coefficient theorem for homotopy (cf.\citep[Theorem 3.8]{Katuta60} or \citep[Proposition 1.3.4]{Bauesbook}) tells that for any space $Y$, there is an epimorphism of groups for each $n\geq 2$:   
\begin{equation}
\begin{tikzcd}
	{[M_n(G),Y]}\ar[r,two heads,"\pi_n(-)"]&\Hom(G,\pi_n(Y)).
 \end{tikzcd}\tag{$\natural$}\label{UCT}
\end{equation}
Taking $Y=M_n(G)$, we get $\NE(M_n(G))=n$ for any $n\geq 2$.

\begin{proposition}\label{prop:Moore}
	Let $G$ be a finitely generated abelian group. If $M_n(G)$ and $Y$ are homologically (or homotopically) $n$-distant, $n\geq 2$, then $\A{n}(M_n(G)\times Y)$ is reducible. If, in addition, $\NE(Y)\leq n$, then $\NE(M_n(G)\times Y)=n$, by Theorem \ref{thm:scn-prod}.

	\begin{proof}
	Suppose that $f=(f_M,f_Y)\in\A{n}(M_n(G)\times Y)$ and let $\phi=[\phi_{ij}]_{2\times 2}$ be the inverse of $\pi_n(f)$. The matrix multiplication $\pi_n(f)\cdot \phi=\id_{G\oplus\pi_n(Y)}$ then implies 
	\begin{align}\label{eq:Moore}
	\pi_n(f_{MM})\phi_{11}&+\pi_n(f_{MY})\phi_{21}=\id_G,\notag\\
	\pi_n(f_{YM})\phi_{11}&+\pi_n(f_{YY})\phi_{21}=0,\tag{$\ast\ast$}\\
	\pi_n(f_{YM})\phi_{12}&+\pi_n(f_{YY})\phi_{22}=\id_{\pi_n(Y)}.\notag
	\end{align}
    By the surjection (\ref{UCT}), $\phi_{21}=\pi_n(u)$ for some $u\in [M_n(G),Y]$, and hence by the first equation of (\ref{eq:Moore}) we have  
	\[\pi_n(f_{MM})\phi_{11}=\id_G-\pi_n(f_{MY}\circ u)\in\Aut(G).\]
	Since $G$ is finitely generated, both $\pi_n(f_{MM})$ and $\phi_{11}$ are automorphisms. 

    By the last two equations of (\ref{eq:Moore}) we get
	\[\pi_n(f_{YY})(\phi_{22}-\phi_{21}\phi_{11}^{-1}\phi_{12})=\id_{\pi_n(Y)}.\]
	Similarly, the matrix multiplication $\phi\cdot \pi_n(f)=\id_{G\oplus\pi_n(Y)}$ implies 
	\[(\phi_{22}-\phi_{21}\phi_{11}^{-1}\phi_{12})\pi_n(f_{YY})=\id_{\pi_n(Y)}.\]
	Thus $\pi_n(f_{YY})\in\Aut(\pi_n(Y))$, which completes the proof. 
	\end{proof}
\end{proposition}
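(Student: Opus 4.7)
The plan is to verify the reducibility criterion by analysing the matrix representation of $\pi_n(f)$ directly, using the universal coefficient surjection (\ref{UCT}) to realise algebraic data by topological maps so that the homotopical $n$-distance hypothesis can be applied.

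First I would observe that $\pi_k(M_n(G))=0$ for $k<n$, so for any $f\in\A{n}(M_n(G)\times Y)$ the conditions on $\pi_k(f)$ with $k<n$ are automatic and the only nontrivial level to analyse is $k=n$, where $\pi_n(M_n(G)\times Y)\cong G\oplus\pi_n(Y)$. Write $\pi_n(f)$ as the $2\times 2$ matrix of components $\pi_n(f_{ij})$, let $\phi=[\phi_{ij}]$ denote its inverse, and record the four scalar equations coming from $\pi_n(f)\phi=\id$ and $\phi\,\pi_n(f)=\id$, exactly as in (\ref{eq:Moore}).

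The key step is to upgrade the off-diagonal entry $\phi_{21}\in\Hom(G,\pi_n(Y))$ to a topological map. By (\ref{UCT}) there is $u\in[M_n(G),Y]$ with $\pi_n(u)=\phi_{21}$. Then the composition $f_{MY}\circ u\colon M_n(G)\to Y\to M_n(G)$ is a self-map of $M_n(G)$ that factors through $Y$. By the homotopical $n$-distance assumption (and Definition \ref{def:n-dist}; the homological variant passes to the homotopical one via Hurewicz at degree $n$ since $M_n(G)$ is $(n-1)$-connected), its induced endomorphism on $\pi_n(M_n(G))=G$ is nilpotent. The first equation of (\ref{eq:Moore}) then rewrites as $\pi_n(f_{MM})\phi_{11}=\id_G-\pi_n(f_{MY}\circ u)$, whose right-hand side is quasi-regular, i.e.\ an automorphism of $G$. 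Since $G$ is finitely generated abelian (hence Hopfian and co-Hopfian on surjective/injective endomorphisms), it follows that both $\pi_n(f_{MM})$ and $\phi_{11}$ lie in $\Aut(G)$.

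With $\phi_{11}$ invertible, I would then extract $\pi_n(f_{YY})\in\Aut(\pi_n(Y))$ by a Schur-complement manoeuvre on the remaining three equations of (\ref{eq:Moore}): the relations $\pi_n(f_{YM})\phi_{11}+\pi_n(f_{YY})\phi_{21}=0$ and $\pi_n(f_{YM})\phi_{12}+\pi_n(f_{YY})\phi_{22}=\id$ combine to give $\pi_n(f_{YY})(\phi_{22}-\phi_{21}\phi_{11}^{-1}\phi_{12})=\id$, and the dual manipulation with $\phi\,\pi_n(f)=\id$ produces the one-sided inverse on the other side; thus $\pi_n(f_{YY})$ is invertible. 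By Lemma \ref{lem:redu-equiv} this proves reducibility, and the final clause $\NE(M_n(G)\times Y)=n$ follows from Theorem \ref{thm:scn-prod}. The only delicate point is the step where we must turn the purely algebraic datum $\phi_{21}$ into a genuine map $u$; the UCT surjection (\ref{UCT}) is precisely what makes this possible, and it is the reason the hypothesis on $X$ can be weakened from those of Theorem \ref{thm:redu-prod} (no nilpotent-central/radical requirement on self-maps of $Y$ is needed here, only the plain $n$-distance).
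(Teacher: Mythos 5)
Your proposal is correct and follows essentially the same route as the paper: the same matrix equations for $\pi_n(f)$ and its inverse, the same use of the surjection (\ref{UCT}) to realise $\phi_{21}$ as $\pi_n(u)$ so that the $n$-distance hypothesis makes $\id_G-\pi_n(f_{MY}\circ u)$ an automorphism, the Hopfian argument for finitely generated $G$, and the same Schur-complement step yielding $\pi_n(f_{YY})\in\Aut(\pi_n(Y))$. Your added remarks (triviality of the levels $k<n$ and the Hurewicz passage from homological to homotopical $n$-distance) merely make explicit details the paper leaves implicit.
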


\begin{corollary}\label{cor:prod-Moore}
Let $G_1,\cdots,G_m$ be finitely generated abelian groups and let $n_1,\cdots,n_m$ be mutually different integers that are greater than $1$. Then  
\[\NE\big(M_{n_1}(G_1)\times\cdots\times M_{n_m}(G_m)\big)=\max\{n_1,\cdots,n_m\}.\]
\begin{proof}
	We may assume that $n_1>\cdots>n_m$. Inductively applying Proposition \ref{prop:Moore} it suffices to show that 
	$M_{n_1}(G_1)$ and $Y=M_{n_2}(G_2)\times\cdots\times M_{n_m}(G_m)$ is homotopically $n_1$-distant.

	The case $m=2$ is clear, since $M_{n_1}(G_1)$ and $M_{n_2}(G_2)$ are homologically distant. If $m\geq 3$, given maps $f\colon M_{n_1}(G)\to Y$ and $g\colon Y\to M_{n_1}(G_1)$, we have \[\pi_{n_1}(g\circ f)=\pi_{n_1}(g\circ e_2\circ f)+\cdots+\pi_{n_1}(g\circ e_m\circ f),\] where $e_j=i_j\circ p_j$ is the idempotent of $Y$ that factors through $M_{n_j}(G_j)$, $j=2,\cdots,m$. Each endomorphism $H_{n_1}(g\circ e_j\circ f)$ on the homology group $H_{n_1}(M_{n_1}(G_1))$ is trivial, the naturality of the Hurewicz theorem then implies so are $\pi_{n_1}(g\circ e_j\circ f)$ , $j=2,\cdots,m$. Thus $\pi_{n_1}(g\circ f)=0$, and therefore $M_{n_1}(G_1)$ and $Y$ is homotopically $n_1$-distant.
\end{proof}
\end{corollary}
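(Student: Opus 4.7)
The plan is to induct on $m$, using Proposition \ref{prop:Moore} as the workhorse. After relabelling I may assume $n_1 > n_2 > \cdots > n_m$, so $N := \max_i n_i = n_1$. At each step I will peel off the largest-degree Moore space and apply Proposition \ref{prop:Moore} with $X = M_{n_1}(G_1)$ and $Y$ equal to the product of the remaining factors; the inductive hypothesis supplies $\NE(Y) = n_2 \leq n_1$, so the only thing to verify at each stage is that $X$ and $Y$ are homotopically (or homologically) $n_1$-distant.

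For the base case $m = 2$, with $Y = M_{n_2}(G_2)$, the homological $n_1$-distance is immediate: the reduced integral homology of each $M_{n_i}(G_i)$ is concentrated in degree $n_i$ and $n_1 \neq n_2$, so any composition $Y \to X \to Y$ induces the zero map on $H_k$ for every $k \leq n_1$. Proposition \ref{prop:Moore} then yields $\NE(X \times Y) = n_1$.

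For the inductive step $m \geq 3$, set $Y = M_{n_2}(G_2) \times \cdots \times M_{n_m}(G_m)$, so by induction $\NE(Y) = n_2$. Given $f \colon X \to Y$ and $g \colon Y \to X$, exploit that $\pi_{n_1}(-)$ commutes with finite products: the idempotents $e_j = i_j \circ p_j \colon Y \to Y$ satisfy $\pi_{n_1}(\mathrm{id}_Y) = \sum_{j=2}^{m} \pi_{n_1}(e_j)$, hence
\[
\pi_{n_1}(g \circ f) \;=\; \sum_{j=2}^{m} \pi_{n_1}(g \circ e_j \circ f).
\]
Each summand factors through $M_{n_j}(G_j)$, whose $n_1$-th integral homology vanishes since $n_j < n_1$; hence $H_{n_1}(g \circ e_j \circ f) = 0$. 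Because $X = M_{n_1}(G_1)$ is simply connected and the Hurewicz map $\pi_{n_1}(X) \to H_{n_1}(X)$ is an isomorphism, naturality of Hurewicz forces $\pi_{n_1}(g \circ e_j \circ f) = 0$ for each $j$. Therefore $\pi_{n_1}(g \circ f) = 0$, which is certainly nilpotent, so $X$ and $Y$ are homotopically $n_1$-distant and Proposition \ref{prop:Moore} closes the induction.

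The main (minor) obstacle is the passage from homology triviality to homotopy triviality of each composite, which requires the Hurewicz isomorphism in the bottom relevant dimension together with the simple connectivity of $M_{n_1}(G_1)$ (needing $n_1 \geq 2$, which is part of the hypothesis). Everything else is a formal application of Proposition \ref{prop:Moore} combined with the direct-sum decomposition of $\pi_{n_1}$ on a finite product.
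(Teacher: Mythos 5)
Your proposal is correct and follows essentially the same route as the paper: reduce to showing $M_{n_1}(G_1)$ and the product of the remaining factors are homotopically $n_1$-distant, handle $m=2$ by homological distance, and for $m\geq 3$ decompose $\pi_{n_1}(g\circ f)$ via the idempotents $e_j$ and pass from vanishing on $H_{n_1}$ to vanishing on $\pi_{n_1}$ by naturality of the Hurewicz map. Your explicit remark that this passage uses the Hurewicz isomorphism (injectivity suffices) for the simply connected space $M_{n_1}(G_1)$ is exactly the point the paper's proof relies on, just stated more carefully.
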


\begin{corollary}\label{cor:EM}
	Let $G$ be a finitely generated abelian group and let $Y$ be a space such that $\pi_j(Y)=0$ for $j>n$, $n\geq 2$. If $K_n(G)$ and $Y$ is homotopically $n$-distant, then $\NE(K_n(G)\times Y)=n$.
	\begin{proof}
	Since $\pi_j(Y)=0$ for $j>n$, $\NE(Y)\leq n$, and the canonical inclusion map $M_n(G)\to K_n(G)$ induces a bijection (cf. \citep[Proposition 2.4.13]{ArkBook}): 
	\[[K_n(G),Y]\xra{\cong}[M_n(G),Y].\]
Composing the surjection (\ref{UCT}) we get a surjection
	\[[K_n(G),Y] \to \Hom(G,\pi_n(Y)).\]
	
	The proof of the reducibility of $\A{n}(K_n(G)\times Y)$ is then totally parallel to that of Proposition \ref{prop:Moore}. 
	\end{proof} 
\end{corollary}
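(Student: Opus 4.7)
The plan is to apply Theorem~\ref{thm:scn-prod} with $N=n$. Since $\NE(K_n(G))=n$ and the hypothesis $\pi_j(Y)=0$ for $j>n$ yields $\NE(Y)\le n$ (the conditions defining $\A{m}(Y)$ for $m>n$ become vacuous above degree $n$), we have $\max\{\NE(K_n(G)),\NE(Y)\}=n$, so it suffices to prove that $\A{n}(K_n(G)\times Y)$ is reducible; the equality $\NE(K_n(G)\times Y)=n$ will then follow.

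The crucial first step is to construct a surjection $\alpha\colon[K_n(G),Y]\twoheadrightarrow\Hom(G,\pi_n(Y))$ to serve as the analogue of~(\ref{UCT}). The canonical inclusion $M_n(G)\hookrightarrow K_n(G)$ can be realised by attaching cells of dimension $\ge n+2$; since $\pi_j(Y)=0$ for all $j>n$, the obstructions to extending a map across each such cell and to homotoping two extensions both vanish, producing a bijection $[K_n(G),Y]\xra{\cong}[M_n(G),Y]$, which is \citep[Proposition 2.4.13]{ArkBook}. Composing with the Moore-space surjection~(\ref{UCT}) delivers $\alpha$.

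With $\alpha$ in hand, reducibility of $\A{n}(K_n(G)\times Y)$ follows by direct transcription of the proof of Proposition~\ref{prop:Moore} with $M_n(G)$ replaced by $K_n(G)$. Given $f=(f_K,f_Y)\in\A{n}(K_n(G)\times Y)$, let $\phi=[\phi_{ij}]$ be the inverse of $\pi_n(f)$ regarded as a $2\times 2$ block matrix on $G\oplus\pi_n(Y)$. Lift $\phi_{21}\in\Hom(G,\pi_n(Y))$ via $\alpha$ to some $u\in[K_n(G),Y]$. Then $f_{KY}\circ u$ is a self-map of $K_n(G)$ factoring through $Y$, so homotopy $n$-distance makes $\pi_n(f_{KY}\circ u)$ nilpotent, and hence $\id_G-\pi_n(f_{KY}\circ u)\in\Aut(G)$. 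The first row of $\pi_n(f)\cdot\phi=\id$ then reads $\pi_n(f_{KK})\phi_{11}=\id_G-\pi_n(f_{KY}\circ u)\in\Aut(G)$, showing that $\pi_n(f_{KK})$ is surjective; finite generation of $G$ upgrades surjectivity to automorphism. A Schur-complement manipulation using both $\pi_n(f)\phi=\id$ and $\phi\pi_n(f)=\id$ then yields $\pi_n(f_{YY})\in\Aut(\pi_n(Y))$. For $k<n$ the group $\pi_k(K_n(G))$ vanishes, so $\pi_k(f)=\pi_k(f_{YY})$ is automatically an automorphism. Reducibility follows from Lemma~\ref{lem:redu-equiv}.

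The only genuine obstacle is securing the bijection $[K_n(G),Y]\cong[M_n(G),Y]$; this is precisely where the hypothesis $\pi_j(Y)=0$ for $j>n$ is used, and it is what separates the Eilenberg--MacLane case from the Moore-space case already handled in Proposition~\ref{prop:Moore}. Every subsequent algebraic step imports from the Moore-space argument without alteration.
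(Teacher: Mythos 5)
Your proposal is correct and follows essentially the same route as the paper: establish $\NE(Y)\le n$ from the vanishing of higher homotopy groups, obtain the surjection $[K_n(G),Y]\twoheadrightarrow\Hom(G,\pi_n(Y))$ by composing the bijection $[K_n(G),Y]\cong[M_n(G),Y]$ (which the paper cites and you justify by obstruction theory) with the universal-coefficient surjection (\ref{UCT}), then run the argument of Proposition \ref{prop:Moore} verbatim and invoke Theorem \ref{thm:scn-prod}. The only difference is that you spell out details the paper leaves to ``totally parallel'' (nilpotency via symmetry of $n$-distance, the Hopfian property of finitely generated $G$, the Schur-complement step), all of which are accurate.
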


\subsection{Products of atomic spaces}\label{sec:atomic}
In the sense of Baker and May \cite{BM04}, a $p$-local CW-complex $X$ is \emph{atomic of Hurewicz dimension $n_0$} if $\pi_{k<n_0}(X)=0$ and $\pi_{n_0}(X)$ is a nonzero cyclic module over $\Z_{(p)}$, and every self-map of $X$ inducing an automorphism on $\pi_{n_0}(X)$ is a homotopy equivalence. The atomicity of spaces appeared in the study of exponents of homotopy groups of $p$-local spheres and mod $p^r$ Moore spaces \cite{CMN}. Note that in this case $\NE(X)=n_0$ and $\End(\pi_{n_0}(X))$ is local. An important property of atomic spaces is that they are 
``prime" in the following sense.

\begin{lemma}\label{lem:atomic}
	Let $X$ be an atomic space of Hurewicz dimension $n$. If $X$ is a retract of $Y\times Z$, then $X$ is a retract of $Y$ or $Z$.
	\begin{proof}
	Let $e_J$ be the idempotent self-map of $Y\times Z$ that factors through $J$,  $J\in \{Y,Z\}$; that is, $e_I$ is the composition given by
	\[Y\times Z\xra{p_I}I\xra{i_J} Y\times Z.\]
	 Let $f_I\colon X\to X$ be the composition given by 
	\[X\xra{\phi}Y\times Z\xra{e_I}Y\times Z\xra{\psi}X, \]
	where $\phi,\psi$ satisfies $\psi\phi=\id_{X}$. Then the formula $\pi_n(e_Y)+\pi_n(e_Z)=\id$ implies that  \[\pi_n(f_Y)+\pi_n(f_Z)=\id_{\pi_n(X)}.\]
	 Since $\End(\pi_n(X))$ is local, either $\pi_n(f_Y)$ or $\pi_n(f_Z)$ is an automorphism of $\pi_n(X)$. $\NE(X)=n$ then implies that either $f_Y$ or $f_Z$ is a homotopy equivalence, and therefore $X$ is a retract of $Y$ or $Z$.
	\end{proof}
\end{lemma}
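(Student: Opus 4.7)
The plan is to exploit the locality of $\End(\pi_n(X))$ via the standard idempotents on $Y\times Z$. Let $\phi\colon X\to Y\times Z$ and $\psi\colon Y\times Z\to X$ be maps with $\psi\circ\phi\simeq \id_X$ witnessing $X$ as a retract, and set $e_J=i_J\circ p_J$ for $J\in\{Y,Z\}$, so that under the natural splitting $\pi_n(Y\times Z)\cong \pi_n(Y)\oplus\pi_n(Z)$ one has $\pi_n(e_Y)+\pi_n(e_Z)=\id$. Define the auxiliary self-maps $f_J=\psi\circ e_J\circ\phi\colon X\to X$; applying $\pi_n$ yields $\pi_n(f_Y)+\pi_n(f_Z)=\pi_n(\psi\circ\phi)=\id_{\pi_n(X)}$.

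The central step is a local-ring argument. Since $\pi_n(X)$ is a nonzero cyclic $\Z_{(p)}$-module, $\End(\pi_n(X))$ is a quotient of $\Z_{(p)}$ and is therefore local, so its non-units form an ideal. Consequently, because $\pi_n(f_Y)+\pi_n(f_Z)$ is a unit, at least one summand must be a unit; without loss of generality assume $\pi_n(f_Y)\in\Aut(\pi_n(X))$. Because $X$ is atomic of Hurewicz dimension $n$, $\pi_k(X)=0$ for all $k<n$, so $\pi_k(f_Y)$ is trivially an automorphism for every $k<n$; combined with the conclusion of the previous sentence this places $f_Y$ in $\A{n}(X)$, and the atomicity hypothesis $\NE(X)=n$ upgrades this to $f_Y\in\E(X)$.

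To conclude, factor $f_Y=(\psi\circ i_Y)\circ(p_Y\circ\phi)$ and let $f_Y^{-1}$ be a homotopy inverse. Then the maps $\alpha=f_Y^{-1}\circ\psi\circ i_Y\colon Y\to X$ and $\beta=p_Y\circ\phi\colon X\to Y$ satisfy $\alpha\circ\beta\simeq f_Y^{-1}\circ f_Y\simeq\id_X$, exhibiting $X$ as a retract of $Y$. The main subtlety is the local-ring step: once one verifies that the endomorphism ring of a cyclic $\Z_{(p)}$-module (necessarily $\Z_{(p)}$ itself or $\Z/p^r$) is local, the rest is a direct application of the definitions of atomicity and $\NE(X)$.
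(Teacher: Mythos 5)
Your proof is correct and follows essentially the same route as the paper: the same idempotents $e_J=i_J\circ p_J$, the same auxiliary maps $f_J=\psi\circ e_J\circ\phi$ with $\pi_n(f_Y)+\pi_n(f_Z)=\id$, and the same local-ring plus atomicity argument. The only additions are welcome bits of explicitness (why $\End(\pi_n(X))$ is local, and the final factorization $f_Y=(\psi\circ i_Y)\circ(p_Y\circ\phi)$ exhibiting the retraction), which the paper leaves implicit.
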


\begin{theorem}\label{thm:atomic}
	Let $X_1,\cdots,X_m$ be atomic spaces of Hurewicz dimensions $n_1,\cdots,n_m$, respectively. If for any $i\neq j$, $n_i\neq n_j$ and $X_i$ is not a retract of $X_j$, then \[\NE(X_1\times\cdots\times X_m)=\max\{n_1,\cdots,n_m\}.\]

\begin{proof}
Arranging the Hurewicz dimensions $n_i$ such that $n_1<\cdots<n_m$, it suffices to show that $\A{n_m}(X_1\times\cdots\times X_m)$ is reducible. 

If $m=2$, $n_1<n_2$ implies that $\A{n_2-1}(X_1\times X_2)$ is reducible and $f=(f_1,f_2)\in\A{n_2}(X_1\times X_2)\subseteq \A{n_1}(X_1\times X_2)$ is equivalent to $f_{11}\in\A{n_1}(X_1)=\E(X_1)$. Suppose that $\pi_{n_2}(f)=(f_{1\sharp},f_{2\sharp})\in\Aut(\pi_{n_2}(X_1\times X_2))$. Let $g\in\E(X_1)$ be the inverse of $f_{11}$.  Using the arguments in the proof of Theorem \ref{thm:redu-prod}, we have  
\begin{align*}
	f_{21\sharp}\phi_{12}+f_{22\sharp}\phi_{22}=\id_{\pi_{n_2}(X_2)};\\
\phi_{12}=-(g f_{12})_\sharp\phi_{22},\quad
\phi_{22}\in\Aut(\pi_{n_2}(X_2)).	
\end{align*}	
Since $\pi_{n_2}(X_2)$ is local, the above first equation implies that $f_{21\sharp}\phi_{12}$ or $f_{22\sharp}\phi_{22}$ is an automorphism.  If $f_{21\sharp}\phi_{12}=-f_{21\sharp}(g f_{12})_\sharp\phi_{22}$ is an automorphism, then $(f_{21}gf_{12})_\sharp$ is an automorphism of $\pi_{n_2}(X_2)$. Since $X_2$ is atomic, $f_{21}gf_{12}\in\E(X_2)$, which in turn implies that $X_2$ is a retract of $X_1$, contradiction. It follows that $f_{22\sharp}\phi_{22}$, or equivalently $f_{22\sharp}$ is an automorphism.   

If $m\geq 3$, inductively applying Lemma \ref{lem:atomic} we see that $X_m$ is not a retract of $X=X_1\times\cdots\times X_{m-1}$. By induction and totally similar arguments, $f\in\A{n_m}(X\times X_m)$ implies that $f_{XX}\in\E(X)$ and $f_{X_mX_m}\in\E(X_m)$. Thus $\A{n_{m}}(X_1\times \cdots\times X_m)$ is reducible, the proof completes. 
\end{proof}		
\end{theorem}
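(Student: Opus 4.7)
The plan is to reduce via Theorem \ref{thm:scn-prod}: it suffices to prove that $\A{N}(X_1\times\cdots\times X_m)$ is reducible, where $N=\max\{n_1,\ldots,n_m\}$. After relabeling so that $n_1<n_2<\cdots<n_m=N$, I will induct on $m$.

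For the base case $m=2$, take $f=(f_1,f_2)\in\A{n_2}(X_1\times X_2)$ and first observe that $\pi_{n_1}(X_2)=0$, so $\pi_{n_1}(f)$ is automatically lower triangular, forcing $f_{11}\in\A{n_1}(X_1)=\E(X_1)$. Let $g$ be its homotopy inverse and $\phi=[\phi_{ij}]$ the inverse of $\pi_{n_2}(f)$. The matrix manipulations from the proof of Theorem \ref{thm:redu-prod} will give $\phi_{22}\in\Aut(\pi_{n_2}(X_2))$, $\phi_{12}=-\pi_{n_2}(g\circ f_{12})\phi_{22}$, and
\[ \pi_{n_2}(f_{21})\phi_{12}+\pi_{n_2}(f_{22})\phi_{22}=\id_{\pi_{n_2}(X_2)}. \]
Since $\pi_{n_2}(X_2)$ is a nonzero cyclic $\Z_{(p)}$-module, $\End(\pi_{n_2}(X_2))$ is a local ring, so one of the two summands on the left must be a unit. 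If $\pi_{n_2}(f_{21})\phi_{12}=-\pi_{n_2}(f_{21}\circ g\circ f_{12})\phi_{22}$ is a unit, then $\pi_{n_2}(f_{21}\circ g\circ f_{12})\in\Aut(\pi_{n_2}(X_2))$; atomicity of $X_2$ then promotes this to $f_{21}\circ g\circ f_{12}\in\E(X_2)$, exhibiting $X_2$ as a retract of $X_1$ and contradicting the hypothesis. Hence $\pi_{n_2}(f_{22})\phi_{22}$, and therefore $\pi_{n_2}(f_{22})$, is an automorphism, and atomicity gives $f_{22}\in\E(X_2)$, which by Lemma \ref{lem:redu-equiv} proves reducibility.

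For the inductive step $m\geq 3$, I set $X=X_1\times\cdots\times X_{m-1}$. The inductive hypothesis supplies $\NE(X)=n_{m-1}<n_m$, and a repeated application of Lemma \ref{lem:atomic}---together with the non-retract hypothesis among the $X_i$---ensures that $X_m$ is not a retract of $X$, since any such retraction would, by peeling off factors one at a time, eventually display $X_m$ as a retract of some $X_i$ with $i<m$. I then rerun the base-case argument verbatim with $(X,X_m)$ replacing $(X_1,X_2)$, using $\NE(X)<n_m$ to obtain $f_{XX}\in\E(X)$ and the local-ring dichotomy together with the non-retract property to obtain $f_{X_mX_m}\in\E(X_m)$. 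This establishes reducibility and completes the induction.

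The main obstacle is the dichotomy arising from local-ness of $\End(\pi_{n_m}(X_m))$: the matrix algebra alone only tells us that \emph{one} of the two summands is a unit, and it is precisely the non-retract hypothesis that selects the correct branch. Propagating that hypothesis from pairs of atomic factors to the larger products built up during the induction is exactly what makes Lemma \ref{lem:atomic}, the ``primality'' of atomic spaces with respect to products, indispensable.
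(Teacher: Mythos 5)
Your proposal is correct and follows essentially the same route as the paper: reduce to reducibility of $\A{n_m}$ via Theorem \ref{thm:scn-prod}, handle $m=2$ by the matrix identities from the proof of Theorem \ref{thm:redu-prod} together with the local-ring dichotomy for $\End(\pi_{n_2}(X_2))$, atomicity, and the non-retract hypothesis, then induct using Lemma \ref{lem:atomic} to rule out $X_m$ being a retract of $X_1\times\cdots\times X_{m-1}$. No substantive differences from the paper's argument.
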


\bibliographystyle{amsplain}
\bibliography{refs}
\end{document}